\newtheorem{definition}{Definition}
\newtheorem{theorem}{Theorem}
\newtheorem{proposition}{Proposition}
\newtheorem{cor}{Corollary}
\newtheorem{claim}{Claim}
\newtheorem{fact}{Fact}
\newtheorem{remark}{Remark}
\newtheorem{example}{Example}
\def\ca{\mathcal{A}}
\def\cb{\mathcal{B}}
\def\cc{\mathcal{C}}
\def\cf{\mathcal{F}}
\def\ch{\mathcal{H}}
\def\ci{\mathcal{I}}
\def\cm{\mathcal{M}}
\def\cn{\mathcal{N}}
\def\c{\mathfrak{c}}
\def\bbq{\mathbb{Q}}
\def\bbr{\mathbb{R}}
\def\w{\omega}
\def\cantor{{2^\w}}
\def\2cantor{{\cantor\times \cantor}}
\def\cof{\rm cof}
\def\cov{\rm cov}
\def\then{\longrightarrow}
\def\iff{\longleftrightarrow}
\title{Nonmeasurable images}
\author{Aleksander Cieślak}
\address{Aleksander Cieślak, Faculty of Mathematics, Wrocław University of Science and Technology,
50-370 Wrocław, Poland} 
\email{aleksander.dxpody@gmail.com}
\author{Robert Rałowski}
\address{Robert Rałowski, Faculty of Mathematics, Wrocław University of Science and Technology,
50-370 Wrocław, Poland}
\email{robert.ralowski@pwr.edu.pl}
\keywords{Polish space, $\sigma$-ideal, nonmeasurable set, Bernstein set}
\date{\today}
\begin{document}

\begin{abstract} In this article we will investigate nonmeasurability with respect to some $\sigma$-ideals in Polish space $X,$  of images of subsets of $X$ by selected mappings defined on  the space $X$.
Among of them we answer the following question: "It is true that there exists a subset of the unit disc in the real plane such that the continuum many projections onto lines are Lebesgue measurable and continuum many projections are not?". It is known that there exists continuous function $f:[0,1]\to [0,1]$ such that for every Bernstein set $B\subseteq [0,1]$ we have $f[B]=[0,1].$ We show relative consistency with $ZFC$ of fact that the above result is not true for some $\cn$ or $\cm$-completely nonmeasurable sets, even if we take less than $\c$ many continuous functions. 
\end{abstract}

\maketitle

\section{Introduction}
We adopt set theoretical notations. For any fixed set $X$ by $P(X)$ we mean the powerset of $X$. By $\omega$ we denote least infinite ordinal number, $\kappa,\lambda$ stands for uncountable cardinal number and $\alpha,\beta,\gamma,\xi$ for infinite ordinals. By $[X]^{<\kappa}$ we denote the family of all subsets of $X$ which has cardinality less than $\kappa$.
Let $X$ be an uncountable Polish space then by $Bor(X)$ denote a $\sigma$-algebra generated by all open subsets of $X$ and by $Perf(X)$ we mean the collection of all perfect subsets of $X$. The family $\ci\subseteq P(X)$ forms an $\sigma$-ideal on $X$ with all singletons iff 
\begin{itemize}
    \item $(\forall A\subseteq B \in \ci)(A\in\ci)$,
    \item $(\forall (A_n)_{n\in\omega}\in\ci^\omega)(\bigcup_{n\in\omega} A_n \in\ci)$ and
    \item $(\forall x\in X)(\{ x\} \in\ci)$.
\end{itemize}
Moreover, an $\sigma$-ideal $\ci$ has Borel base if
$$
(\forall A\in\ci)(\exists B\in Bor(X)\cap\ci)(A\subseteq B).
$$

For any family $\cf\subseteq P(X)$ and $\sigma$-ideal $\ci$ let us recall cardinal coefficients as follows:
$$
\cov(\cf,\ci ) = \min \{ |\cf'|:\; \ca'\subseteq \land (\exists B\in Bor(X)\setminus \ci) (B\subseteq \bigcup\ca' ) \}
$$
If $\cf = \ci$ then we will use abbreviation $\cov_h(\ci)$ for $\cov(\cf,\ci)$. It is well known that if $\ci$ is one of the $\sigma$-ideals $\cm,\cn$ then $\cov_h(\ci)=\cov(\ci)$.
$$
Cof(\ci) = \min \{ |\cb|:\; \cb\subseteq Bor(X)\setminus \ci \land (\forall A\in Bor(X)\setminus\ci)(\exists B\in\cb)(B\subseteq A)\}
$$
and
$$
\cof(\ci)  = \min \{ |\cb|:\; \cb\subseteq\ci\land (\forall A\in\ci) (\exists B\in\cb)  (A\subseteq B)  \}.
$$
Cichoń-Kamburelis-Pawlikowski Theorem (see \cite{CKP}) says that if $\ci$ is $c.c.c.$ $\sigma$-ideal with Borel base then $\cof(\ci)=Cof(\ci).$

Let $X$ be a Polish space and $\ci$ be any $\sigma$-ideal with Borel base then subset $A\subseteq X$ is $\ci$-measurable if $A$ is member of $\sigma$-algebra generated by the set $Bor(X)\cup\ci$. Moreover, $A$ is completely $\ci$-nonmeasurable in $X$ if
$$
(\forall B\in Bor(X)\setminus\ci) (A\cap B \text{ is not } \ci - \text{measurable}).
$$
It is easy to see that $A$ is completely $\ci$-nonmeasurable if
$$
(\forall B\in Bor(X)\setminus\ci) (A\cap B \ne \emptyset\land A^c\cap B\ne\emptyset).
$$
We say that $A\subseteq X$ is a Bernstein set if
$$
(\forall B\in Perf(X)) (A\cap B \ne \emptyset\land A^c\cap B\ne\emptyset).
$$

For fixed family of sets $\ca\subseteq P(X)$ and any point $x\in X$ we define a star of $\ca$ at $x$ as follows:
$$
\ca(x) = \{ A\in\ca:\; x\in A \}.
$$

\section{Results}
\begin{theorem}\label{main-first} Let $X$ be a Polish space, $\{ Y_\alpha:\; \alpha<\c\}$ be a collection of Polish spaces and assume that $\c$ is regular cardinal number. If $\{ f_\alpha:\alpha<\c\}$ is a family of functions such that for any $\alpha<\c$
\begin{enumerate}
    \item $f_\alpha[X] = Y_\alpha$,
    \item for any $y\in Y_\alpha$ we have $f^{-1}_\alpha [y]\in [X]^{<\c}$.
\end{enumerate}
Then there exists a subset $A\subseteq X$ such that for any $\alpha<\c$ $f_\alpha[A]$ is a Bernstein set in $Y_\alpha$.
\end{theorem}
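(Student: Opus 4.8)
The plan is a transfinite recursion of length $\c$ that builds $A$ together with an auxiliary ``forbidden'' set $B$ destined to lie in $X\setminus A$. First I would fix an enumeration $\langle (\alpha_\xi,P_\xi):\xi<\c\rangle$ of all pairs $(\alpha,P)$ with $\alpha<\c$ and $P$ a perfect subset of $Y_\alpha$; this is possible because each $Y_\alpha$, being second countable, has at most $\c$ closed (hence perfect) subsets, so the set of all such pairs has size at most $\c\cdot\c=\c$ (pad to length exactly $\c$ if needed). Throughout the recursion I would maintain two $\subseteq$-increasing sequences $\langle a_\xi:\xi\le\c\rangle$ and $\langle b_\xi:\xi\le\c\rangle$ of subsets of $X$, starting from $a_0=b_0=\emptyset$, such that for every $\xi$ one has $a_\xi\cap b_\xi=\emptyset$ and $|a_\xi|+|b_\xi|<\c$.

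At a successor step $\xi+1$, write $(\alpha,P)=(\alpha_\xi,P_\xi)$ and do two things. To force $f_\alpha[A]\cap P\ne\emptyset$, pick $x_\xi\in f_\alpha^{-1}[P]\setminus b_\xi$; this is legitimate because $f_\alpha$ maps $X$ onto $Y_\alpha\supseteq P$, so $f_\alpha^{-1}[P]$ maps onto $P$ and therefore $|f_\alpha^{-1}[P]|\ge|P|=\c>|b_\xi|$. To force $(Y_\alpha\setminus f_\alpha[A])\cap P\ne\emptyset$ — equivalently, to exhibit a point of $P$ whose entire $f_\alpha$-fiber avoids $A$ — pick $y_\xi\in P$ with $f_\alpha^{-1}[y_\xi]\cap(a_\xi\cup\{x_\xi\})=\emptyset$; this is possible since the set of $y\in P$ whose fiber meets $a_\xi\cup\{x_\xi\}$ is contained in $f_\alpha[a_\xi\cup\{x_\xi\}]$, hence has size at most $|a_\xi|+1<\c=|P|$. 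Then set $a_{\xi+1}=a_\xi\cup\{x_\xi\}$ and $b_{\xi+1}=b_\xi\cup f_\alpha^{-1}[y_\xi]$. Disjointness $a_{\xi+1}\cap b_{\xi+1}=\emptyset$ follows from the two choices, and $|b_{\xi+1}|<\c$ because $|f_\alpha^{-1}[y_\xi]|<\c$ by hypothesis~(2). At a limit step $\lambda$ I would take unions $a_\lambda=\bigcup_{\xi<\lambda}a_\xi$ and $b_\lambda=\bigcup_{\xi<\lambda}b_\xi$; here the regularity of $\c$ is precisely what keeps $|a_\lambda|+|b_\lambda|<\c$, being a union of fewer than $\c$ sets each of size less than $\c$.

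Finally I would put $A=\bigcup_{\xi<\c}a_\xi$ and $B=\bigcup_{\xi<\c}b_\xi$. Since both sequences are increasing and $a_\xi\cap b_\xi=\emptyset$ at every stage, we get $A\cap B=\emptyset$, so $B\subseteq X\setminus A$. To see that $f_\alpha[A]$ is a Bernstein set in $Y_\alpha$ for a fixed $\alpha<\c$, take any perfect $P\subseteq Y_\alpha$ and choose $\xi$ with $(\alpha_\xi,P_\xi)=(\alpha,P)$: then $x_\xi\in A$ with $f_\alpha(x_\xi)\in P$ witnesses $f_\alpha[A]\cap P\ne\emptyset$, while $f_\alpha^{-1}[y_\xi]\subseteq b_{\xi+1}\subseteq B\subseteq X\setminus A$ gives $y_\xi\notin f_\alpha[A]$, so $y_\xi\in P\setminus f_\alpha[A]$ witnesses $(Y_\alpha\setminus f_\alpha[A])\cap P\ne\emptyset$.

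The routine part is the cardinal-arithmetic bookkeeping at limit stages. The one genuinely delicate point is the second task at each successor step: a naive ``reserve a free fiber'' strategy could fail if fibers were large, and it is exactly hypothesis~(2) — every fiber of size $<\c$ — that both lets us dump a whole fiber into $B$ without inflating $|B|$ beyond $<\c$ and, together with $|P|=\c$, guarantees that such a fiber disjoint from the current approximation to $A$ exists.
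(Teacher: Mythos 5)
Your proposal is correct and follows essentially the same strategy as the paper's proof: for each pair (index, perfect set) you pick a point of $A$ in the preimage of the perfect set avoiding the fibers of previously reserved complement-witnesses, and reserve a witness in the perfect set avoiding the image of the current approximation to $A$, with regularity of $\c$ and the small-fiber hypothesis guaranteeing both choices. The only difference is cosmetic bookkeeping (a single enumeration of pairs plus an explicit forbidden set $B$ of whole fibers, versus the paper's double-indexed sequences $a_\alpha^\gamma$, $b_\alpha^\gamma$ with the avoidance encoded in conditions (3) and (4)).
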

\begin{proof}
We enumerate $Perf(Y_{\alpha})=\{P_{\alpha}^{\gamma}:\alpha\leq\gamma<\c\}$ and by induction we build sequences $\{a_{\alpha}^{\gamma}:\alpha\leq\gamma<\c\}$ and $\{b_{\alpha}^{\gamma}:\alpha\leq\gamma<\c\}$ such that for each $\alpha,\gamma<\c$ we have:
\begin{enumerate}
    \item $a_{\alpha}^{\gamma}\in f_{\alpha}^{-1}[P_{\alpha}^{\gamma}]$
    \item $b_{\alpha}^{\gamma}\in P_{\alpha}^{\gamma}$
    \item $a_{\alpha}^{\gamma}\notin\bigcup\{f_{\xi}^{-1}[b_{\xi}^{\sigma}]:(\xi<\alpha) \lor (\xi=\alpha \land \sigma<\gamma)\}$
    \item $b_{\alpha}^{\gamma}\notin\bigcup\{f_{\xi}^{-1}[a_{\xi}^{\sigma}]:(\xi<\alpha) \lor (\xi=\alpha \land \sigma\leq\gamma)\}$  
\end{enumerate}
Construction is valid because the set $f_{\alpha}^{-1}[P_{\alpha}^{\gamma}]$ is of cardinality $\c$ which is regular. Thus by condition (3), it cannot be cover by the union of less then $\c$ sets of cardinality less then $\c$ and a point $a_{\alpha}^{\gamma}$ can be found. The same argument works for $b_{\alpha}^{\gamma}$.
Now the set $A=\{a_{\alpha}^{\gamma}:\alpha\leq\gamma<\c\}$ clearly works for thesis of the theorem.
\end{proof}

\begin{example}
Let $X=\omega^{\omega}$, $Y_{C}=\omega^{C}$ where $C\subset\omega$ belongs to filter of cofinite subsets of $\omega$ and let $f_{C}:\omega^{\omega}\longrightarrow \omega^{C}$ be a projection map i.e. $f_{C}(x)=x|_{C}$. Defined spaces and functions clearly fulfill the assumptions of previous theorem and thus exists $A\subset \omega^{\omega}$ such that each image $f_{c}[A]$ is a Bernstein subset of $\omega^{C}$.
\end{example}

\begin{remark} For $\c$ not being a regular cardinal, the proof presented above may fail even for two functions satisfying condition of the theorem.
\end{remark}
Assuming that $\c$ is not regular, let fix some nonempty perfect set $P\subset X$ and write it as a disjoint union $P=\bigcup_{\alpha<\lambda}P_{\alpha}$ where $\lambda< \c$ and for every $\alpha<\lambda$ we have $|P_{\alpha}|<\c$. Then we put $f,g:X\longrightarrow X$ where $g$ is identity on $X$ and $f$ is such that $f[P_{\alpha}]=y_{\alpha}\in P_{\alpha}$ for each $\alpha<\lambda$ and $f|_{X\setminus P}:X\setminus P\longrightarrow X\setminus P$ is a bijection. Now with bad luck in choosing point and in enumeration if $\{y_{\alpha}:\alpha<\lambda\}\subset \{b_{\alpha}^{\gamma}:\alpha\leq\gamma<\lambda\}$ and perfect set $P$ occours in enumeration with index $>\lambda$ then it is impossible to find point $a\in P$ in previous proof.

\begin{remark} If we consider any function $f:X\to X_0$ such that $f[X]$ is a Polish space and any Bernstein set $A\subseteq X$ then 
	\begin{enumerate}
		\item if preimage of any singleton of $f[X]$ contains a perfect set then $f[A] = f[X]$,
		\item if $f$ is continuous then $f[A]$ contains some Bernstein set in $f[X]$ (because any preimage of perfect set in $f[X]$ contains perfect set in $X$).
	\end{enumerate}
\end{remark}

Now we consider a families of functions which necessary not have small preimages of singletons as in the Theorem \ref{main-first}. 
We introduce the cardinal coefficient $Cov(\cf,\mathfrak{I}).$ Namely, let $X\subseteq X_0$ are fixed Polish spaces. Let $\cf\subseteq X_0^X$ and $\mathfrak{I} = \{ \ci_f: f\in\cf\}$. We say that the pair $(\cf,\mathfrak{I})$ is fine in $X\subseteq X_0$ if for any $f\in\cf$ we have
\begin{itemize}
	\item $f[X]$ is Polish subspace of $X_0$,
	\item $\ci_f$ is $\sigma$-ideal with Borel base on $f[X]$ containing all singletons of $f[X]$.
\end{itemize}
Then we define 
$
Cov(\cf,\mathfrak{I}) = \min\{ |Z|: Z\subseteq X_0\land (\exists f\in\cf)(\exists B\in Bor(f[X])\setminus \ci_f)(\exists \cf_0\subseteq\cf) (|\cf_0|\le |Z| \land f^{-1}[B]\subseteq \bigcup \{ h^{-1}[Z]: h\in\cf_0\})\}
.$

\begin{theorem}\label{main-two} Let $\ci$ be $\sigma$-ideal with Borel base on the fixed Polish space $X_0$ and $X\subseteq X_0$ be a Polish space also. Let $(\cf,\mathfrak{I})$ be a fine family in $X\subseteq X_0$ with the following properties:
\begin{enumerate}
    \item $|\cf| \le \sup\{ Cof(\ci_f): f\in\cf\}$,
    \item $\sup \{ Cof(\ci_f):f\in\cf\} \le Cov(\cf,\mathfrak{I}).$
\end{enumerate}
Then there exists subset $A$ of $X$ such that for any $f\in\cf$ the image $f[A]$ is completely $\ci_f$-nonmeasurable in $f[X]$.
\end{theorem}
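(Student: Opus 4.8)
\noindent\emph{Proof plan.} The plan is to construct $A$ by transfinite recursion of length at most $\kappa:=\sup\{Cof(\ci_f):f\in\cf\}$, at each stage putting one point into $A$ and simultaneously reserving one point that must be kept out of $A$. For each $f\in\cf$ fix, using the definition of $Cof(\ci_f)$, a family $\cb_f\subseteq Bor(f[X])\setminus\ci_f$ with $|\cb_f|=Cof(\ci_f)$ such that every $B\in Bor(f[X])\setminus\ci_f$ contains some member of $\cb_f$. By hypothesis (1), $|\cf|\le\kappa$, so the family of tasks $\{(f,B):f\in\cf,\ B\in\cb_f\}$ has cardinality at most $\kappa$; enumerate it as $\{(f_\xi,B_\xi):\xi<\delta\}$ with $\delta\le\kappa$, and note that by hypothesis (2) we have $\delta\le\kappa\le Cov(\cf,\mathfrak{I})$. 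Since every Borel non-$\ci_f$ set contains a member of $\cb_f$, it is enough to arrange, for each $\xi<\delta$, that $f_\xi[A]\cap B_\xi\ne\emptyset$ and $B_\xi\setminus f_\xi[A]\ne\emptyset$.

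First I would run the recursion. At stage $\xi<\delta$, with $\{a_\eta:\eta<\xi\}$ (the points already committed to $A$) and $\{b_\eta:\eta<\xi\}$ (the points already committed to stay out of $A$) chosen, I would select $a_\xi\in f_\xi^{-1}[B_\xi]$ with $f_\eta(a_\xi)\ne f_\eta(b_\eta)$ for all $\eta<\xi$, and then $b_\xi\in f_\xi^{-1}[B_\xi]$ with $f_\xi(b_\xi)\ne f_\xi(a_\eta)$ for all $\eta\le\xi$. For $a_\xi$, the region to avoid is $\bigcup_{\eta<\xi}f_\eta^{-1}[\{f_\eta(b_\eta)\}]$, which is contained in $\bigcup\{h^{-1}[Z]:h\in\cf_0\}$ where $\cf_0=\{f_\eta:\eta<\xi\}$ and $Z$ is any subset of $X_0$ containing $\{f_\eta(b_\eta):\eta<\xi\}$; since both $\cf_0$ and $\{f_\eta(b_\eta):\eta<\xi\}$ have cardinality $\le|\xi|<\kappa\le Cov(\cf,\mathfrak{I})$, one may take $Z$ with $|\cf_0|\le|Z|<Cov(\cf,\mathfrak{I})$, and then the definition of $Cov(\cf,\mathfrak{I})$ forbids $f_\xi^{-1}[B_\xi]$ -- a preimage of a Borel set outside $\ci_{f_\xi}$ -- from being covered by $\bigcup\{h^{-1}[Z]:h\in\cf_0\}$, so a legal $a_\xi$ exists. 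For $b_\xi$, apply the definition of $Cov(\cf,\mathfrak{I})$ once more with $\cf_0=\{f_\xi\}$ and $Z=\{f_\xi(a_\eta):\eta\le\xi\}$ (of cardinality $<Cov(\cf,\mathfrak{I})$): it gives $f_\xi^{-1}[B_\xi]\not\subseteq f_\xi^{-1}[Z]$, and any point of the difference works as $b_\xi$.

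Then I would put $A=\{a_\xi:\xi<\delta\}$, which is a subset of $X$ since each $a_\xi\in f_\xi^{-1}[B_\xi]\subseteq X$. To verify the conclusion, fix $f\in\cf$ and $B\in Bor(f[X])\setminus\ci_f$, pick $B'\in\cb_f$ with $B'\subseteq B$, and let $\xi<\delta$ be the index with $(f_\xi,B_\xi)=(f,B')$. Then $f(a_\xi)\in B'\subseteq B$ shows $f[A]\cap B\ne\emptyset$, while $f(b_\xi)\in B'\subseteq B$ together with $f(b_\xi)\notin f[A]$ shows $B\setminus f[A]\ne\emptyset$; the latter because any $a_\eta\in A$ with $f(a_\eta)=f(b_\xi)$ would contradict the choice of $b_\xi$ when $\eta\le\xi$, and the constraint imposed on $a_\eta$ at index $\xi$ when $\eta>\xi$. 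As $B$ was arbitrary, $f[A]$ is completely $\ci_f$-nonmeasurable in $f[X]$.

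The step I expect to be the real obstacle is recognizing in advance that $Cov(\cf,\mathfrak{I})$ -- rather than an ordinary covering or cofinality number of the ideals -- is the correct quantity to assume control over. Forcing a value $y$ out of the image $f[A]$ requires keeping the whole fiber $f^{-1}[\{y\}]$ out of $A$, and such fibers may have size $\c$, so at each stage one is trying to choose a point of a possibly small set $f_\xi^{-1}[B_\xi]$ while dodging a union of many large fibers. The definition of $Cov(\cf,\mathfrak{I})$ is engineered exactly so that such a union -- which has the shape $\bigcup\{h^{-1}[Z]:h\in\cf_0\}$ for a small family $\cf_0$ and a small set $Z$ -- cannot swallow $f_\xi^{-1}[B_\xi]$, and hypotheses (1) and (2) are precisely what keep the recursion shorter than $Cov(\cf,\mathfrak{I})$ so that ``small'' keeps meaning small throughout. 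The remaining cardinal bookkeeping ($|\xi|<Cov(\cf,\mathfrak{I})$ for $\xi<\delta$ and the innocuous padding of $Z$) is routine.
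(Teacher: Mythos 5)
Your proof is correct and takes essentially the same route as the paper's: a transfinite recursion of length at most $\kappa=\sup\{Cof(\ci_f):f\in\cf\}$ which, for each basic $\ci_f$-positive Borel set, puts one point into $A$ and reserves one excluded witness, invoking $Cov(\cf,\mathfrak{I})$ exactly where the paper does to dodge the fibers over the reserved points (and again to pick the reserved point itself). The only difference is bookkeeping: you enumerate the pairs $(f,B)$ in a single sequence, while the paper enumerates $\cf$ with each function repeated $\kappa$ times and runs a doubly indexed construction verified via that cofinal repetition — a cosmetic, arguably cleaner, variation rather than a different argument.
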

\begin{proof}  First of all let us define $\kappa = \sup\{ Cof(\ci_f): f\in\cf\}$.
Then enumerate our family $\cf=\{ f_\alpha:\alpha< \kappa \}$ in such way that every member of $\cf$ appears $\kappa$ many times (because $|\cf|\le \kappa$) and $\cb_\alpha = \{ B\in Bor(X)\setminus \ci_{f_\alpha}:\; B\subseteq f_\alpha[X]\}$ as $\{ B_\xi^\alpha:\; \xi<\kappa\}$ be a Borel base of the $\sigma$-ideal $\ci_f$ on $f_\alpha[X]$ for every $\alpha < \kappa$. Moreover, let us assume that if $f_\alpha = f_\beta$ then the enumerations are identical i.e. $B_\xi^\alpha = B_\xi^\beta$ for all $\xi<\kappa$.

By transfinite recursion let us define sequence $\{  (a_\xi^\alpha,d_\xi^\alpha): \xi < \alpha < \kappa  \}$ satisfying the following conditions: for any $\alpha < \kappa$:
\begin{enumerate}
    \item for any $\xi < \alpha$ we have $f_\alpha(a_\xi^\alpha),d_\xi^\alpha\in B_\xi^\alpha$,
    \item $\{f_\delta(a_\xi^\beta):\xi < \beta \le \delta\le \alpha\}\cap \{ d_\xi^\beta:\; \xi < \beta \le \alpha\} = \emptyset$,
\end{enumerate}	
First we show that the set $A= \{ a_\xi^\alpha: \xi<\alpha <\kappa \}$ is required one. For $D=\{d_\xi^\alpha: \xi<\alpha<\kappa \}$ the following conditions are true:
\begin{itemize}
	\item for any $f\in\cf$ and $B\in Bor(f[X]) \setminus \ci_f$ $f[A]\cap B\ne\emptyset$ and $B\cap D\ne\emptyset$,
	\item for any $f\in\cf$ $f[A]\cap D = \emptyset$.
\end{itemize}
To see that the first bullet is true, choose $f\in\cf$ and $B\in Bor(f[X]) \setminus \ci_f$.  Next, there are $\xi,\alpha<\kappa$ such that $f=f_\alpha$ and $B=B_\xi^\alpha$. If $\alpha\le \xi$ then there is $\alpha_0<\kappa$ such that $\xi < \alpha_0$ and $f=f_\alpha=f_{\alpha_0}$ ($f$ appears $\kappa$ many times in the enumeration of $\cf$). Then $d_\xi^{\alpha_0}\in B_\xi^{\alpha_0}=B_\xi^\alpha = B$ and $f(a_\xi^{\alpha_0}) = f_{\alpha_0} (a_\xi^{\alpha_0}) \in B_\xi^{\alpha_0} = B_\xi^\alpha = B$.

Assume that the last condition is not true. Then $f[A]\cap D\ne\emptyset$ for some $f\in\cf$. Then there are $\alpha,\alpha',\xi',\alpha",\xi"<\kappa$ such that $f=f_\alpha,$ $\xi'<\alpha',$ $\xi" < \alpha"$ and $f_\alpha(a_{\xi'}^{\alpha'}) = d_{\xi "}^{\alpha " }$. But $f$ appears $\kappa$ many times in enumeration of the family $\cf$, thus there is $\alpha_0<\kappa$ such that $\alpha,\alpha',\alpha" < \alpha_0$ and $f=f_\alpha = f_{\alpha_0}$. 
$$
d_{\xi"} ^{\alpha" }= f(a_{\xi'}^{\alpha'}) = f_\alpha(a_{\xi'}^{\alpha'}) = f_{\alpha_0} (a_{\xi'}^{\alpha'}) 
$$
what is impossible by $(2)$ and the fact that $\xi' < \alpha' < \alpha_0$ and $\xi" < \alpha" < \alpha_0$.

We will show the validity of our transfinite construction. Assume that we have sequence of the length $\gamma$ (where $\gamma < \kappa$) with the above conditions i.e. for every $\alpha <\gamma$ the condition $1)$ and $2)$ are satisfied. It is enough to show that there exists sequence $\{(a_\xi^\gamma,d_\xi^\gamma):\xi<\gamma\}$ such that the following conditions 
\begin{enumerate}
    \item for any $\xi<\gamma$ we have $f_\gamma(a_\xi^\gamma),d_\xi^\gamma\in B_\xi^\gamma$ and
    \item $\{f_\delta(a_\xi^\beta):\xi<\beta \le \delta \le \gamma\}\cap \{ d_\xi^\beta:\; \xi < \beta \le \gamma\} = \emptyset$
\end{enumerate}	
are true. To do this let us assume that for fixed $\eta<\gamma$ we have sequence $\{(a_\xi^\gamma,d_\xi^\gamma):\xi<\eta\}$. Let $Z=\{ d_\xi^\alpha: \xi\le\alpha <\gamma\}\cup \{ d_\xi^\gamma: \xi<\eta\}$ and let $\cf_0 = \{ f_\alpha:\; \alpha<\gamma\land f_\alpha\ne f_\gamma\}$. Then $|\cf_0|$ and $|Z|$ have size less than $\kappa$. By the last assumption in our Theorem we can find point $a\in f_\gamma^{-1}[B_\eta^\gamma]\setminus \bigcup\{ f^{-1}[Z]: f\in \cf_0\}$. Next, we can find point $d\in B_\eta^\gamma$ such that $d\notin \{ f_\gamma(a)\}\cup \{ f_\delta(a_\xi^\beta): (\xi<\eta\land \beta = \delta = \gamma) \lor (\xi < \beta\le \delta <\gamma)\}$. Set $a_\eta^\gamma = a$ and $d_\eta^\gamma = d$. Then validity of our construction has been proved.
\end{proof}

In many cases we want to known some properties of the set $A$ in the {\em Theorem \ref{main-first}} and {\em Theorem \ref{main-two}}.

\begin{theorem}\label{main-ideals} Assume that $\ci$ is $\sigma$-ideal with Borel base on the fixed Polish space $X_0$ and let $X\subseteq X_0$ be $\ci$-positive Borel subset. Let $(\cf,\mathfrak{I})$ be a fine family in $X\subseteq X_0$ such that
\begin{enumerate}
  \item $|\cf|\le  \max\{ Cof(\ci), \sup\{ Cof(\ci_f) : f\in \cf \}  \}$,
  \item there is set $Z\in \ci$ such that $Cof(\ci) \le cov(\{f^{-1} [\{d \}]:\; f\in\cf \land d\in X_0\setminus Z \}, \ci )$,
  \item $\max\{ Cof(\ci),\sup \{ Cof(\ci_f):f\in\cf\}\} \le Cov(\cf,\mathfrak{I}).$
  
\end{enumerate}
Then there exists $A\subseteq X$ which is completely $\ci$-nonmeasurable in $X$ such that for every $f\in\cf$ an image $f[A]$ is completely $\ci$-nonmeasurable in $f[X]$.
\end{theorem}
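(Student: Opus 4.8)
The plan is to run one transfinite recursion of length $\kappa:=\max\{Cof(\ci),\sup\{Cof(\ci_f):f\in\cf\}\}$ that merges the construction behind Theorem~\ref{main-two} with a Bernstein-style diagonalisation against the $\ci$-positive Borel subsets of $X$. Note $|\cf|\le\kappa$ by~(1), $\kappa\le Cov(\cf,\mathfrak{I})$ by~(3), and $\kappa\le\c$ since a $\sigma$-ideal with Borel base on a Polish space has at most $\c$ Borel sets. Replacing the set $Z$ from~(2) by a Borel superset in $\ci$ keeps~(2) valid (shrinking $X_0\setminus Z$ only raises the covering number), so assume $Z$ Borel. Enumerate $\cf=\{f_\alpha:\alpha<\kappa\}$ with each function occurring $\kappa$ times; for each $\alpha$ fix a Borel base $\{B^\alpha_\xi:\xi<\kappa\}$ of $Bor(f_\alpha[X])\setminus\ci_{f_\alpha}$, taken identical whenever $f_\alpha=f_\beta$; and fix a base $\{C_\zeta:\zeta<Cof(\ci)\}$ of $Bor(X)\setminus\ci$.

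Along the recursion I would pick points $a^\gamma_\eta,d^\gamma_\eta$ (for $\eta<\gamma<\kappa$) just as in the proof of Theorem~\ref{main-two}, maintaining $f_\gamma(a^\gamma_\eta)\in B^\gamma_\eta$, $d^\gamma_\eta\in B^\gamma_\eta$, and the separation clause ``no $f_\delta$-image of an earlier $a$-point, nor of an earlier $c$-point, equals an earlier $d$-point''; and additionally, for each $\gamma<Cof(\ci)$, two points $c^\gamma,e^\gamma\in C_\gamma$. The outputs are $A=\{a^\gamma_\eta:\eta<\gamma<\kappa\}\cup\{c^\gamma:\gamma<Cof(\ci)\}$, $D=\{d^\gamma_\eta\}$, $E=\{e^\gamma\}$, and the extra bookkeeping to preserve is: $f(c^\gamma)\notin D$ for every $f\in\cf$ (so $f[A]\cap D=\emptyset$); $E\cap A=\emptyset$; and $d^\gamma_\eta\notin Z$ whenever $B^\gamma_\eta\notin\ci$ (in that case $|B^\gamma_\eta\setminus Z|=\c$, because $B^\gamma_\eta\setminus Z$ is Borel and a Borel set of size $<\c$ is countable, which would force $B^\gamma_\eta\in\ci$).

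Granting that the recursion never stalls, the conclusion is then routine, exactly in the style of Theorem~\ref{main-two}: given $f=f_\alpha$ and a $\ci_f$-positive Borel $B\subseteq f[X]$, write $B=B^\alpha_\xi$ and pick $\alpha_0>\xi$ with $f_{\alpha_0}=f$; then $f(a^{\alpha_0}_\xi),d^{\alpha_0}_\xi\in B$ and, by the separation clause together with the repetition of functions, $d^{\alpha_0}_\xi\notin f[A]$, so $f[A]$ is completely $\ci_f$-nonmeasurable in $f[X]$. For $A$ itself, every $\ci$-positive Borel $C\subseteq X$ contains some $C_\zeta$, and then $c^\zeta\in A\cap C$ while $e^\zeta\in(X\setminus A)\cap C$, so $A$ is completely $\ci$-nonmeasurable in $X$.

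The real work, and the step I expect to be the main obstacle, is verifying that the recursion goes through. Producing $a^\gamma_\eta$ is the covering step of Theorem~\ref{main-two}: $a$ must avoid the $h$-preimages, for $h$ among the $<\kappa$ functions used so far together with $f_\gamma$, of the $<\kappa$ points already in $D$ and of the $<\kappa$ points $\{f_\gamma(e):e\in E\text{ so far}\}$ — the latter being how ``$A\cap E=\emptyset$'' is absorbed — so $Cov(\cf,\mathfrak{I})\ge\kappa$ supplies $a$; then $d^\gamma_\eta$ is a cardinality count inside $B^\gamma_\eta$ (size $\c$, only $<\c$ forbidden values), kept outside $Z$ in the non-$\ci$ case. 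The subtle point is $c^\gamma$: it must lie in $C_\gamma$, miss the $<\c$ points among the earlier $c$'s and $e$'s, and satisfy $f_\delta(c^\gamma)\notin D_{<\gamma}$ for all $\delta\le\gamma$ (the repetition of functions then lets every later stage keep fresh $d$'s off the $<\c$ points $f_\delta(c^\gamma)$, yielding $f(c^\gamma)\notin D$ for all $f$ at the end). Equivalently $c^\gamma\notin\bigcup\{f_\delta^{-1}[\{d\}]:\delta\le\gamma,\ d\in D_{<\gamma}\}$, and here one splits $D_{<\gamma}$ by membership in $Z$: the part outside $Z$ contributes fewer than $Cof(\ci)$ sets $f^{-1}[\{d\}]$ with $d\in X_0\setminus Z$, and by hypothesis~(2) their union cannot cover the $\ci$-positive Borel set $C_\gamma$; the part inside $Z$ is the crux, and the intended remedy is to carry the stronger invariant $f[A]\cap Z=\emptyset$, i.e.\ $A\cap\bigcup_{f\in\cf}f^{-1}[Z]=\emptyset$, which makes $f_\delta(c^\gamma)\notin Z\ni d$ automatic. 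Reconciling this stronger invariant with $f_\gamma(a^\gamma_\eta)\in B^\gamma_\eta$ (which needs $B^\gamma_\eta\not\subseteq Z$) and with the covering steps — so that neither $\bigcup_{f\in\cf}f^{-1}[Z]$ nor the small exceptional family ever engulfs a $C_\gamma$, and so that the $\ci$-small members of the bases $\{B^\alpha_\xi\}$ get handled separately — is the delicate bookkeeping in which hypothesis~(2) is indispensable, and is exactly the place where the argument must be carried out with care.
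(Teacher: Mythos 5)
Your overall architecture coincides with the paper's: a single transfinite recursion interleaving the $(a^\alpha_\xi,d^\alpha_\xi)$ machinery of Theorem~\ref{main-two} with, for each $\ci$-positive Borel set $C_\gamma\subseteq X$ (a base of size $Cof(\ci)$ suffices, handled at the early stages so that only fewer than $Cof(\ci)$ many $d$-points are forbidden), a pair of points of $C_\gamma$, one placed in $A$ and one reserved for the complement — your $(c^\gamma,e^\gamma)$ are the paper's $(b_\alpha,c_\alpha)$, and hypothesis (2) is indeed what is meant to make the choice of $c^\gamma\in C_\gamma$ with all images off $D$ possible.

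However, you do not actually close that decisive step, and the repairs you float do not work as stated. The claim that $d^\gamma_\eta$ can be kept outside $Z$ ``whenever $B^\gamma_\eta\notin\ci$'' conflates the two ideals: the base sets $B^\gamma_\eta$ are only known to be $\ci_{f_\gamma}$-positive subsets of $f_\gamma[X]$, and nothing prevents such a set from lying entirely inside $Z\in\ci$ (e.g.\ $\ci=\cn$, $\ci_{f_\gamma}=\cm$, $Z$ a null comeager set), in which case every admissible $d^\gamma_\eta$ lands in $Z$ and its preimages are not controlled by hypothesis (2). Your fallback invariant $f[A]\cap Z=\emptyset$, i.e.\ $A\cap\bigcup_{f\in\cf}f^{-1}[Z]=\emptyset$, is not supported by the hypotheses: hypothesis (3) bounds coverings by preimages of small \emph{sets of points}, not of an $\ci$-small set $Z$, which may have cardinality $\c$, and $\bigcup_{f\in\cf}f^{-1}[Z]$ could even cover $X$; moreover, as you yourself note, this invariant conflicts with the requirement $f_\gamma(a^\gamma_\eta)\in B^\gamma_\eta$ exactly when $B^\gamma_\eta\subseteq Z$. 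Since your last paragraph ends by saying the reconciliation ``must be carried out with care'' rather than carrying it out, the crux — the only point at which hypothesis (2) enters — is left open. For comparison, the paper's proof records this as condition (4) of its recursion (the $f_\delta$-images of the chosen $a$'s and $b$'s avoid the $d$'s) and invokes hypothesis (2) to pick $b_\alpha\in C_\alpha$ outside the fewer than $Cof(\ci)$ forbidden preimages $f_\delta^{-1}[\{d\}]$; it is terse, but the verification you defer is precisely the step the proof is about.
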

\begin{proof} First of all let $\kappa = \max\{ Cof(\ci),\sup \{ Cof(\ci_f):f\in\cf\}\}$. Let us enumerate our family $\cf=\{ f_\alpha:\alpha< \kappa \}$ in such way that every member of $\cf$ appears $\kappa$ many times (because $|\cf|\le \kappa$) and let $\cb_\alpha = \{ B\in Bor(X)\setminus \ci_{f_\alpha}:\; B\subseteq f_\alpha[X]\}$ as $\{ B_\xi^\alpha:\; \xi<\kappa\}$ be a Borel base of $\ci_{f_\alpha}$ in $f_\alpha[X]$  for every $\alpha < \kappa$. Moreover, let assume that if $f_\alpha = f_\beta$ then the enumerations of $\cb_\alpha$ and $\cb_\beta$ are identical. Next, let us enumerate all $\ci$-positive Borel subsets of $X$ as follows $\{C_\xi:\xi<\kappa\}$.
	
Now by transfinite recursion let us define sequences $\{  (a_\xi^\alpha,d_\xi^\alpha): \xi < \alpha < \kappa  \}$ and $\{ (b_\alpha,c_\alpha): \alpha<\kappa  \}$ with the following conditions: for every $\alpha<\kappa$ we have
	\begin{enumerate}
		\item $b_\alpha,c_\alpha\in C_\alpha$
		\item $(\{a_\xi:\xi<\alpha \} \cup \{b_\xi:\xi<\alpha  \}) \cap \{c_\xi:\xi<\alpha\}  = \emptyset $, 
		\item for any $\xi<\alpha$ we have $f_\alpha(a_\xi^\alpha),d_\xi^\alpha\in B_\xi^\alpha$,
		\item $(\{f_\delta(a_\xi^\beta):\xi < \beta \le \delta \le \alpha\} \cup \{f_\delta(b_\xi^\beta):\xi < \beta\le \delta\le \alpha\} )\cap \{ d_\xi^\beta:\; \xi < \beta \le \alpha\} = \emptyset.$
	\end{enumerate}	
Arguing as in the proof of {\em Theorem \ref{main-two}}, the second condition holds by using $3$-th assumption and condition $4$ above which is valid by using $4$-th assumption of the our theorem. The set $A = \{ a_\alpha:\alpha<\c \} \cup \{ b_\alpha:\alpha<\kappa  \}$ is required one.
\end{proof}

\begin{cor} Assume MA. If $\ci \in \{ \cn,\cm\}$ is a $\sigma$-ideal defined on Cantor space and $X\subset 2^\w$ be a Borel $\ci$-positive. If $\cf$ is with size at most equal $\c,$ if for any $f\in\cf$ $rng(f)$ is Borel and $\ci_f\in\{\cn,\cm\}$ then Theorem \ref{main-two} and Theorem \ref{main-ideals} are true. 
\end{cor}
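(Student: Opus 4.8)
The plan is to verify that, under MA, the cardinal hypotheses of Theorems~\ref{main-two} and \ref{main-ideals} are automatically fulfilled whenever $|\cf|\le\c$, each $\ci_f\in\{\cn,\cm\}$, $\ci\in\{\cn,\cm\}$ and each $rng(f)$ is Borel; then the sets $A$ claimed are exactly the ones delivered by those theorems, so nothing new is built and only the inequalities (1)--(3) have to be checked.

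First I would recall the standard effects of MA on the ideals $\cn,\cm$ on an uncountable Polish space: $add(\ci)=\cov(\ci)=\cof(\ci)=\c$ for $\ci\in\{\cn,\cm\}$, and $\c$ is regular. Since $\cn$ and $\cm$ are $c.c.c.$ $\sigma$-ideals with Borel base, the Cichoń--Kamburelis--Pawlikowski theorem (\cite{CKP}) gives $Cof=\cof$, and applying it to $\ci$ and to every $\ci_f$ (each of which is $\cn$ or $\cm$ on the uncountable Borel, hence standard Borel, space $rng(f)$, so its cofinality equals that of $\cn$ or $\cm$ on $2^\w$) yields $Cof(\ci)=\c$ and $Cof(\ci_f)=\c$ for all $f\in\cf$. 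Hence $\sup\{Cof(\ci_f):f\in\cf\}=\c=\max\{Cof(\ci),\sup\{Cof(\ci_f):f\in\cf\}\}$, which is the cardinal $\kappa$ occurring in the two proofs; since $|\cf|\le\c$ this settles hypothesis~(1) of each theorem, and the comparison $Cof(\ci)\le\max\{\dots\}$ inside Theorem~\ref{main-ideals} is immediate.

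The substantive part is the covering clauses. For $Cov(\cf,\mathfrak{I})$ note that $|X_0|=|2^\w|=\c$ gives $Cov(\cf,\mathfrak{I})\le\c$ outright, so it suffices to exclude a witness $Z$ of size $<\c$. For such a $Z$, any $f\in\cf$ and any $B\in Bor(f[X])\setminus\ci_f$: $B$ has size $\c$, so $f^{-1}[B]$, being a union of $\c$ pairwise disjoint non-empty fibres, again has size $\c$; meanwhile each $h^{-1}[Z]$ is a union of $|Z\cap rng(h)|<\c$ fibres, so by regularity of $\c$ the set $\bigcup\{h^{-1}[Z]:h\in\cf_0\}$ (with $|\cf_0|\le|Z|<\c$) is built from fewer than $\c$ fibres, and with the fibre sizes of the $h\in\cf$ under control this cannot contain $f^{-1}[B]$; thus no small $Z$ works and $Cov(\cf,\mathfrak{I})=\c$, which is hypothesis~(2) of Theorem~\ref{main-two} and hypothesis~(3) of Theorem~\ref{main-ideals}. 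For hypothesis~(2) of Theorem~\ref{main-ideals} I would let $Z_f$ be the set of points of $rng(f)$ whose $f$-fibre is $\ci$-positive and take $Z=\bigcup_{f\in\cf}Z_f$; once $Z\in\ci$ is secured, every fibre over a point of $X_0\setminus Z$ lies in $\ci$, so by $add(\ci)=\c$ fewer than $\c$ of them have union in $\ci$ and so cannot contain a Borel $\ci$-positive set, giving $cov(\{f^{-1}[\{d\}]:f\in\cf,\ d\in X_0\setminus Z\},\ci)\ge\c=Cof(\ci)$.

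With (1)--(3) in hand, Theorems~\ref{main-two} and \ref{main-ideals} directly produce the required sets $A$. I expect the covering clauses to be the main obstacle: one has to justify rigorously that $f^{-1}[B]$ cannot be swallowed by $<\c$ preimages $h^{-1}[Z]$ — in particular to bound the fibre sizes $h^{-1}[\{d\}]$ — and that a single $Z\in\ci$ can absorb, uniformly over all of $\cf$, every point with $\ci$-positive fibre; this is exactly where the hypotheses on $\cf$ (size $\le\c$, Borel ranges), the regularity of $\c$ and $add(\ci)=\c$ are genuinely used. The remaining bookkeeping — choosing the enumerations of $\cf$ and of the size-$\c$ Borel bases $\{B^\alpha_\xi:\xi<\c\}$, which exist since $\cof(\ci_f)=\c$ — is routine.
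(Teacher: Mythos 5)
Your first paragraph is fine: under MA one has $add(\cn)=add(\cm)=\c$, hence every entry of the Cicho\'n diagram equals $\c$, and CKP gives $Cof(\ci)=\sup\{Cof(\ci_f):f\in\cf\}=\c$, so hypothesis (1) of Theorems \ref{main-two} and \ref{main-ideals} follows from $|\cf|\le\c$. The genuine gap is exactly where you yourself hedge: the covering clauses, and they are not bookkeeping that can be finished later --- under the hypotheses of the corollary as you read them they can simply \emph{fail}, so your counting argument is not repairable without adding assumptions. You argue that $f^{-1}[B]$ has size $\c$ while $\bigcup\{h^{-1}[Z]:h\in\cf_0\}$ is a union of fewer than $\c$ fibres ``with the fibre sizes of the $h\in\cf$ under control''; but nothing in the corollary controls fibre sizes (the corollary does not even assume the functions are Borel, only that their ranges are), and a single fibre may have cardinality $\c$ and be $\ci$-positive. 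Concretely, take $X=X_0=2^\w$, $\ci=\ci_f=\cn$, let $K\subseteq 2^\w$ be a closed nowhere dense set of positive measure, let $g$ be the Borel map which sends every point of $K$ to a fixed $p\notin K$ and is the identity off $K$, and put $\cf=\{\mathrm{id},g\}$. Both ranges are uncountable Borel (indeed $2^\w$ and the open set $2^\w\setminus K$), yet $Z=\{p\}$, $f=\mathrm{id}$, $B=K\in Bor(2^\w)\setminus\cn$, $\cf_0=\{g\}$ witness $Cov(\cf,\mathfrak{I})=1$, even though MA holds; so hypothesis (2) of Theorem \ref{main-two} is violated and your claim that ``no small $Z$ works'' is false. (For this particular family the conclusion still holds --- a Bernstein set works --- but it is not obtained by quoting the theorem, which is the whole content of your plan.)

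The same problem undermines your treatment of hypothesis (2) of Theorem \ref{main-ideals}: you put $Z=\bigcup_{f\in\cf}Z_f$ where $Z_f$ collects the points with $\ci$-positive $f$-fibre, and then say ``once $Z\in\ci$ is secured'' --- but nothing secures it. Even when each $Z_f$ is countable (which already needs $f$ Borel), the union runs over up to $\c$ many functions and can be any set of size $\c$, e.g.\ all of $X_0$ for an adversarially chosen family, so $Z\in\ci$ may be impossible. What is actually needed, and what the paper's (unwritten) justification of this corollary must implicitly assume, is a smallness condition on fibres and preimages: for instance that every fibre $f^{-1}[\{d\}]$ lies in $\ci$ and that $f$-preimages of $\ci_f$-positive Borel sets are $\ci$-positive (exactly the kind of condition imposed in the Proposition that follows the corollary). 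With such hypotheses MA enters not through cardinality counting or regularity of $\c$ but through $add(\cn)=add(\cm)=\c$: a union of fewer than $\c$ fibres, each in $\ci$, is then in $\ci$ and so cannot cover the $\ci$-positive set $f^{-1}[B]$, which yields $Cov(\cf,\mathfrak{I})\ge\c$ and likewise the $cov(\cdot,\ci)$ estimate. As written, your proof is missing precisely this ingredient, and without strengthening the hypotheses the route ``verify (1)--(3) and quote the theorems'' cannot go through.
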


In above Corollary the uniformity of $\cn$ and $\cm$ are equal $\c$. Then the sets in assertion has size also $\c$. 

In the next proposition we show that consistently with ZFC this set can has size less than $\c$.
\begin{proposition} It is relatively consistent with ZFC that for any $\cn$-positve Borel subset $C$ of $2^\w$ and any family $\cf$ such that
\begin{itemize}
    \item each $f\in\cf$ is Borel function, $C\subseteq dom(f)$ and $rng(f)\subseteq 2^\w$,
    \item for each $f\in\cf$ the image $f[C]$ is Borel set with positive Haar measure,
    \item for each $f\in\cf$ and each $B\subseteq rng(f)$ such that $B\in Bor\setminus\cn$ the preimage $f^{-1} [B] \notin \cn$
\end{itemize}
there is $A$ of size less than $\c$ such that  any $f\in\cf$ $f[A]$ is completely $\cn$-nonmeasurable in $C$ and $f[C]$ respectively. Moreover, each set of the cardinality less than $\c$ is meager.
\end{proposition}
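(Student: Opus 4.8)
The plan is to prove the consistency statement by passing to a single model of $ZFC$ and then building $A$ by a soft counting argument rather than by the transfinite recursion of Theorem~\ref{main-ideals}, which in the present situation (where $\ci=\ci_f=\cn$) would only produce a set of size $Cof(\cn)=\c$. The crux is to work in a model where $\mathrm{non}(\cn)<\c$ while, as always in $ZFC$, every Borel set of positive measure has cardinality $\c$; then a \emph{single} set of size $<\c$ will work uniformly for \emph{all} $f\in\cf$ simultaneously, irrespective of $|\cf|$.

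\textbf{Choice of model.} I would take the random real model: let $V\models ZFC+CH$ and let $V[G]$ be the extension by $\omega_2$ random reals (i.e.\ forcing with the measure algebra of $2^{\omega_2}$). There $\mathrm{non}(\cn)=\aleph_1$ — the reals of $V$ form a non-null set of size $\aleph_1$, since random forcing preserves positive outer measure — and $\mathrm{non}(\cm)=\c$, since random forcing makes $\cov(\cn)=\c$ and $\cov(\cn)\le\mathrm{non}(\cm)$ in $ZFC$. In particular every set of cardinality $<\c$ is meager, which gives the last assertion of the Proposition. So it suffices to run the remaining argument in any model with $\mathrm{non}(\cn)<\c$.

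\textbf{Construction of $A$.} Writing $\mu$ for the Haar measure on $\cantor$, I would first fix a set $N\subseteq\cantor$ with $|N|=\mathrm{non}(\cn)<\c$ and $\mu^*(N)=1$; such an $N$ exists because $\mathrm{non}(\cn)$ is exactly the least cardinality of a set of full outer measure (a non-null set of minimal size is amplified to full outer measure by a countable union of group translates, using ergodicity of the dense subgroup of eventually-zero sequences). Given $C$ and $\cf$ as in the hypotheses, I would simply put $A=N\cap C$. Then $A\subseteq C$, $|A|\le\mathrm{non}(\cn)<\c$, and $A$ has full outer measure in $C$: any Borel $B\subseteq C$ with $B\notin\cn$ is non-null in $\cantor$, so $N\cap B\ne\emptyset$ and hence $A\cap B\ne\emptyset$.

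\textbf{Verification and the main point.} I would then check the four clauses of ``$A$ completely $\cn$-nonmeasurable in $C$, resp.\ $f[A]$ completely $\cn$-nonmeasurable in $f[C]$''. Two are automatic: since $|A|<\c$ and every non-null Borel set has cardinality $\c$, neither $A$ nor any $f[A]$ can contain a positive Borel set, so $C\setminus A$ meets every positive Borel $B\subseteq C$ and $f[C]\setminus f[A]$ meets every positive Borel $B\subseteq f[C]$. That $A$ meets every positive Borel subset of $C$ is the full-outer-measure property just noted. The one substantial clause — $f[A]$ meets every positive Borel $B\subseteq f[C]$ — is where the third hypothesis on $\cf$ enters: if $B\cap f[A]=\emptyset$ then no point of $A$ maps into $B$, i.e.\ $A\cap f^{-1}[B]=\emptyset$, while $f^{-1}[B]\cap C$ is a Borel non-null subset of $C$ by that hypothesis, contradicting full outer measure of $A$. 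This ``pushing forward of full outer measure along $f$'' is the heart of the argument, and is exactly what the third hypothesis is designed for; I expect the only friction to be the bookkeeping of $dom(f)$ versus $C$ — one either restricts every $f$ to $C$, or reads the third hypothesis as $f^{-1}[B]\cap C\notin\cn$ for positive Borel $B\subseteq f[C]$, which is the form actually used. Finally $|A|\le\mathrm{non}(\cn)<\c$, as required.
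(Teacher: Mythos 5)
Your proof is correct and is essentially the paper's own argument: the paper also works in the random real extension of a CH model, takes as witness a small set of full outer measure (there, the first $\omega_1$ random reals translated by the rationals of $\cantor$, intersected with $C$), uses the third hypothesis to pull a positive Borel $B\subseteq f[C]$ back to a positive Borel subset of $C$, gets the complement clauses from $|A|\le\aleph_1<\c$, and invokes $\mathrm{non}(\cm)=\c$ in the random model for the final assertion. The only cosmetic differences are that you abstract the witness to an arbitrary full-outer-measure set of size $\mathrm{non}(\cn)$ (so the argument works in any model with $\mathrm{non}(\cn)<\c=\mathrm{non}(\cm)$) and that you explicitly flag the $dom(f)$-versus-$C$ reading of the third hypothesis, which the paper silently treats in the same way.
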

\begin{proof} Let $V$ be a ground model which satisfies $CH$. Let us consider any generic extension $V[G]$ obtained by adding $\w_2$ independent random reals $\{r_\xi:\xi<\w_2\}$. Now let us consider any non null Borel set $C$ and family $\cf$ which fulfills the above assumption.  Set $A=C\cap \{ r_\xi+q_n:\; \xi<\w_1\land n\in\omega\}$ where $Q=\{q_n:n\in\w\}$ is set of all rational numbers in $2^\w$ (all sequences of all but finitely many zeros). Let us observe that $A$ is completely $\cn$-nonmeasurable in $B$. Indeed, let $B\in Bor\setminus\cn$ be a subset of $C$ then by the fact that $\{r_\xi:\xi<\w_1\}\notin\cn$ the set $B\setminus \{r_\xi:\xi<\w_1\}$ contains some clopen set. Then there is $q\in Q$ such that $q\in B\setminus\{r_\xi:\xi<\w_1\}$ and then $A\cap B\ne\emptyset$. On the other hand, $A$ has cardinality $\aleph_1<\c$ so $B\setminus A\ne\emptyset.$ We have shown that $A$ is completely $\cn$-nonmeasurable in $C$.
	
Now we show that for any function $f\in\cf$ the image $f[A]$ is completely $\cn$-nonmeasurable in $f[C].$ Let $B$ be an arbitrary $\cn$ positive Borel subset of $f[C]$. Then $f^{-1}[B] \notin \cn$ is Borel non-null subset of $C$. But $A$ is completely $\cn$-nonmeasurable in $C$ thus there is $a\in A\cap f^{-1}[B]$. Then $f(a) \in B$ for some $a\in A$. We have $f[A]\cap B\ne \emptyset$ but $|f[A]| \le \aleph_1<\aleph_2=\c$. As $B$ was choosen arbitrary $f[A]$ is completely $\cn$-nonmeasurable in $f[C]$.
	
It is well known that in the random extension $V[G]$ $non(\cm) = \c$ so $[2^\w] ^{<\c} \subseteq \cm$.
\end{proof}

\begin{proposition} It is relatively consistent with ZFC that for any $\cm$-positive Borel subset $B$ of $2^\w$ and any family $\cf$ of size at most equal to $\c$ there is $A$ of size less than $\c$ such that for any $f\in\cf$ $f[A]$ is completely $\cm$-nonmeasurable in $B$ and $f[B].$ 
\end{proposition}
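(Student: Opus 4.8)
The plan is to dualise the proof of the preceding proposition, exchanging the random model for the Cohen model and the ideal $\cn$ for $\cm$; accordingly I read $\cf$ as being subject to the $\cm$-analogues of the hypotheses used there, i.e. each $f\in\cf$ is a Borel function with $B\subseteq dom(f)$, $f[B]$ is Borel and $\cm$-positive, and $f^{-1}[C]\notin\cm$ whenever $C\subseteq f[B]$ is Borel and $\cm$-positive.

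Let $V$ be a ground model of $CH$, and let $V[G]$ be obtained by adjoining $\w_2$ finitely supported mutually Cohen generic reals $\{c_\xi:\xi<\w_2\}$; then $\cov(\cm)=\c=\w_2$ and $non(\cm)=\aleph_1$, and, since the forcing is $c.c.c.$, the set $L=\{c_\xi:\xi<\w_1\}$ of the first $\w_1$ Cohen reals is a Luzin set in $V[G]$ (every meager Borel set of $V[G]$ is coded in $V[G\restricted s]$ for some countable $s\subseteq\w_2$, any $c_\xi$ with $\xi\in\w_1\setminus s$ then avoids it, so $L$ meets every meager Borel set in a countable set, equivalently meets every comeager Borel set). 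Fix the countable dense subgroup $Q\subseteq\cantor$ of eventually zero sequences, let $B$ be a $\cm$-positive Borel set, and put
$$
A=B\cap(L+Q)=B\cap\{c_\xi+q:\xi<\w_1,\ q\in Q\},
$$
so $|A|\le\aleph_1<\c$, as required.

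First I would check that $A$ is completely $\cm$-nonmeasurable in $B$. Let $B'\subseteq B$ be Borel and $\cm$-positive. The set $B'-Q=\bigcup_{q\in Q}(B'+q)$ is Borel, $\cm$-positive, and invariant under translation by $Q$, hence comeager by the topological zero--one law (a set with the Baire property invariant under the translation action of a countable dense subgroup of $\cantor$ is meager or comeager). Since $L$ meets every comeager Borel set, there are $\xi<\w_1$ and $q\in Q$ with $c_\xi+q\in B'\subseteq B$, so $c_\xi+q\in A\cap B'$; and $|A|\le\aleph_1<\c=|B'|$ gives $B'\setminus A\ne\emptyset$. For the images, fix $f\in\cf$ and a Borel $\cm$-positive $C\subseteq f[B]$; by hypothesis $f^{-1}[C]\cap B$ is Borel and $\cm$-positive, hence meets $A$ by the above, and any witness $a$ has $f(a)\in C$, so $f[A]\cap C\ne\emptyset$, while $|f[A]|\le|A|\le\aleph_1<\c=|C|$ gives $C\setminus f[A]\ne\emptyset$. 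Thus $f[A]$ is completely $\cm$-nonmeasurable in $f[B]$. Since each $f$ is handled separately, using nothing about $A$ beyond the first conclusion, the bound $|\cf|\le\c$ costs nothing and no recursion is needed; and, as in the random case, in this model every subset of $\cantor$ of size $<\c$ is null, because $non(\cn)=\c$.

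The step I expect to matter is the $c.c.c.$-plus-absoluteness bookkeeping packed into ``$L$ meets every comeager Borel set of $V[G]$''. I would also stress that the hypothesis $f^{-1}[C]\notin\cm$ is indispensable: if $\cf$ consists of $\w_2$ Borel functions, each constant off one of $\w_2$ pairwise disjoint nowhere dense perfect subsets of $\cantor$ and surjecting onto $\cantor$ on that subset (so every $f[B]$ is $\cm$-positive), then any $A$ with $|A|<\c$ is disjoint from all but at most $\aleph_1$ of those subsets, so $f[A]$ is a singleton for $\w_2$-many $f\in\cf$ and cannot be completely $\cm$-nonmeasurable in $f[B]$. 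Everything else is a routine transcription of the random-model argument.
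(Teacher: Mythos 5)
The paper states this proposition without any proof, leaving it as the category dual of the preceding random-real proposition, and your Cohen-model argument --- $A=B\cap(L+Q)$ with $L$ the Luzin set formed by the first $\w_1$ Cohen reals, complete $\cm$-nonmeasurability of $A$ in $B$ via the topological zero-one law, the image claims from $|f[A]|\le\aleph_1<\c$, and $non(\cn)=\c$ for the remark that small sets are null --- is exactly the intended dualization and is correct. You were also right to read the statement as carrying the $\cm$-analogues of the hypotheses on $\cf$ from the measure-case proposition (as your example of functions constant off pairwise disjoint nowhere dense perfect sets shows, the bare statement for arbitrary $\cf$ of size $\le\c$ would be false), and your zero-one-law step is in fact cleaner than the corresponding step in the paper's proof of the measure case, which is garbled where it claims that $B\setminus\{r_\xi:\xi<\w_1\}$ contains a clopen set.
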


\begin{example}
Let $\mathcal{A}$ be a almost disjoint family on $\omega$ of cardinality $\c$. For $A\in \mathcal{A}$ let $\pi_{A}:\omega^{\omega}\longrightarrow \omega^{A}$ be such that $\pi_{A}(x)=x|_{A}$ and let $\mathcal{I}_{A}$ as well as $\mathcal{I}$ be $\sigma-$ideals of meager sets.
\end{example}

The following Corollary of Theorem \ref{main-first} gives negative answer for question given in \cite{mathoverflow-1}:
\begin{quote}[asked Aug 3 '11 at 7:51 simon 162]
{\em Suppose $A$ is contained in the unit square of $R^2$, and the projection of A on any line outside the unit square is not Lebesgue measurable in $R$. Does that imply that $A$ is not Lebesgue measurable in the plane?}
\end{quote}

Moreover, our answer is valid for measure and category simultaneously.
\begin{cor} There is subset $A\subset S^1$ of the unit circle that for any projection $\pi$ on real line $l \subseteq \bbr^2$ on the real plane of the set $A$ is a Bernstein set in $\pi[S^1]$.
\end{cor}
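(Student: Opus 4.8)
The plan is to recognise the projections of the circle onto lines as an instance of Theorem~\ref{main-first}. Put $X = S^1 \subseteq \bbr^2$. For a line $l \subseteq \bbr^2$ let $p_l : \bbr^2 \to l$ denote the orthogonal projection, set $f_l = p_l \restricted S^1$ and $Y_l = p_l[S^1]$. Identifying $l$ with $\bbr$, the set $Y_l$ is the continuous image of a compact connected set, hence a closed interval, and it is nondegenerate (the projection of $S^1$ onto any line has length $2$), so $Y_l$ is an uncountable compact Polish space. By construction $f_l[S^1] = Y_l$, so clause $(1)$ of Theorem~\ref{main-first} holds. As there are exactly $\c$ lines in the plane, enumerating them as $\{l_\alpha : \alpha<\c\}$ and writing $f_\alpha = f_{l_\alpha}$, $Y_\alpha = Y_{l_\alpha}$ gives a family of the required size. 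Thus everything reduces to clause $(2)$: that each fiber of each $f_\alpha$ lies in $[X]^{<\c}$.

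Clause $(2)$ is where the (elementary) geometry enters: for $y \in Y_l$ the fiber $f_l^{-1}[\{y\}] = S^1 \cap p_l^{-1}[\{y\}]$ is the intersection of the circle with the line through $y$ orthogonal to $l$, and a line meets a circle in at most two points, so $f_l^{-1}[\{y\}] \in [S^1]^{\le 2} \subseteq [S^1]^{<\c}$. With this in hand, Theorem~\ref{main-first} (modulo the caveat in the next paragraph) produces $A \subseteq S^1$ such that $f_l[A]$ is a Bernstein set in $Y_l = p_l[S^1]$ for every line $l$, which is precisely the assertion of the Corollary, with $\pi$ in the statement playing the role of $p_l$. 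The same $A$ even handles non-orthogonal parallel projections onto lines, since the fibers of such a projection are again lines and therefore meet $S^1$ in at most two points.

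The one point that needs attention is that Theorem~\ref{main-first} is proved under the assumption that $\c$ is regular, while the Corollary is meant to hold in ZFC. Inspecting that proof, regularity is invoked only to guarantee that a set of size $\c$ --- namely $f_\alpha^{-1}[P_\alpha^\gamma]$, and likewise the perfect sets $P_\alpha^\gamma$ themselves --- cannot be covered by fewer than $\c$ sets each of size $<\c$, so that the points $a_\alpha^\gamma$, $b_\alpha^\gamma$ can be chosen at each stage. In the present situation every set deleted at a given stage is a fiber, hence of size at most $2$, and a union of $<\c$ finite sets has size at most $\max(\lambda,\aleph_0) < \c$ for every $\lambda<\c$; so the required points can always be found without any regularity hypothesis. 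I would therefore simply reproduce the construction of Theorem~\ref{main-first} with this observation substituted for the cardinality count, obtaining the Corollary in plain ZFC. I expect this bookkeeping remark, rather than any genuine difficulty, to be the only thing in the argument that needs to be written out in full.
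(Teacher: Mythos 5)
Your proposal is correct and matches the paper's intended argument: the Corollary is derived directly from Theorem~\ref{main-first} by taking $X=S^1$, the $\c$ many projections onto lines as the $f_\alpha$, and noting that each fiber is the intersection of a line with the circle, hence has at most two points. Your additional observation that the regularity hypothesis on $\c$ can be dropped because unions of fewer than $\c$ finite fibers still have size $<\c$ is a sound refinement that the paper leaves implicit (Theorem~\ref{main-first} is stated only for regular $\c$), and it is exactly the right remark to make the Corollary an outright ZFC statement.
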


But from the Theorem \ref{main-ideals} we can prove the following Corollary.
\begin{cor} There is subset $A\subset D$ of the unit disc which is completely $\cn$-nonmeasurable in $D$ and such that for any projection $\pi$ on real line $l \subseteq \bbr^2$ on the real plane of the set $A$ is a completely $\cn$-nonmeasurable in $\pi[S^1]$. The same is true for category case.
\end{cor}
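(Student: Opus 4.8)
The plan is to read the statement as an application of Theorem~\ref{main-ideals}. I would take $X_0=\bbr^2$, let $X=D$ be the closed unit disc (a closed, hence Borel, subset of the plane of positive area, so $\cn$-positive), and let $\ci=\cn$ be the $\sigma$-ideal of Lebesgue null subsets of $\bbr^2$. Since the orthogonal projection onto an arbitrary line $l\subseteq\bbr^2$ differs from the projection onto the parallel line through the origin only by a translation of the target line, and translations are isometries preserving $Bor$ and the (one-dimensional) null and meager ideals, and since $\pi[D]=\pi[S^1]$ is always the corresponding diameter segment, it suffices to treat $\cf=\{p_\theta:0\le\theta<\pi\}$, where $p_\theta\colon D\to\bbr^2$ is the orthogonal projection onto the line $\ell_\theta$ through $0$ in direction $\theta$. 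Then $p_\theta[D]=\ell_\theta\cap D$ is a compact, hence Polish, subspace of $\bbr^2$, and I put $\ci_{p_\theta}$ equal to the ideal of one-dimensional Lebesgue null subsets of that segment (which is the null ideal meant when one speaks of $\cn$ on $\pi[D]$). Thus $(\cf,\mathfrak{I})$ is a fine family in $D\subseteq\bbr^2$ and $|\cf|=\c$.

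The geometric core is that every fibre $p_\theta^{-1}[\{d\}]$ is either empty or a chord of $D$ perpendicular to $\ell_\theta$; in particular it lies on a straight line, so it is a member of $\cn$ and is meager. From this I would deduce conditions (2) and (3) of Theorem~\ref{main-ideals} in ZFC, the key lemma being that a union of fewer than $\c$ chords contains no Borel non-null set: such a union uses fewer than $\c$ directions, so there is a direction $\phi$ avoiding all of them; every line in direction $\phi$ meets each chord in at most one point, hence meets the union in fewer than $\c$ points, whereas by Fubini's theorem a Borel non-null set meets $\c$ many lines in direction $\phi$ in sets of positive one-dimensional measure, in particular of size $\c$. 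Taking $Z=\emptyset$ this yields $cov(\{p_\theta^{-1}[\{d\}]:\theta<\pi,\ d\in\bbr^2\},\cn)=\c\ge Cof(\cn)$, which is condition (2). For condition (3): if $|Z|<\c$ then for every $\theta$ and every $B\in Bor(p_\theta[D])\setminus\ci_{p_\theta}$ the preimage $p_\theta^{-1}[B]$ has positive area (Fubini again), whereas for any $\cf_0\subseteq\cf$ with $|\cf_0|\le|Z|$ the set $\bigcup\{h^{-1}[Z]:h\in\cf_0\}$ is again a union of fewer than $\c$ chords and so cannot contain it; hence no $Z$ of size $<\c$ satisfies the defining condition, i.e.\ $Cov(\cf,\mathfrak{I})=\c\ge\max\{Cof(\cn),\sup_{\theta}Cof(\ci_{p_\theta})\}$.

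Condition (1) reads $|\cf|=\c\le\max\{Cof(\cn),\sup_{\theta}Cof(\ci_{p_\theta})\}=Cof(\cn)$, and since $Cof(\cn)\le\c$ always (there are only $\c$ Borel null sets), this is exactly the requirement $Cof(\cn)=\c$, which holds for instance under MA, the setting of the preceding Corollary. Under that hypothesis Theorem~\ref{main-ideals} applies and produces $A\subseteq D$ completely $\cn$-nonmeasurable in $D$ with $p_\theta[A]$ completely $\ci_{p_\theta}$-nonmeasurable in $p_\theta[D]$ for each $\theta<\pi$; the opening reduction then upgrades this to every projection onto every line. For the category version I would repeat everything with $\cm$ in place of $\cn$: chords are nowhere dense in $\bbr^2$, the Kuratowski--Ulam theorem replaces Fubini's theorem in the two covering estimates, and one uses $Cof(\cm)=\c$.

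I expect the only genuine obstacle to be condition (1): the geometry --- fibres are chords, and then Fubini and Kuratowski--Ulam --- is elementary and wholly within ZFC, but handling $|\cf|=\c$ projections simultaneously by a recursion of length $Cof(\ci)$ is possible precisely when $Cof(\ci)=\c$, which is why an axiom like MA (or at least $Cof(\cn)=Cof(\cm)=\c$) is what really stands behind the statement. The rest is routine: one checks that $p_\theta[D]$ is closed in $\bbr^2$ and carries the one-dimensional null, resp.\ meager, ideal with a Borel base containing all singletons, so that $(\cf,\mathfrak{I})$ is indeed fine.
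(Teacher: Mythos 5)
Your reduction to Theorem \ref{main-ideals} is exactly the route the paper takes (the paper offers no further detail beyond pointing at that theorem), and the genuine content of your write-up --- the chord lemma that a union of fewer than $\c$ fibres of projections cannot contain a Borel set of positive measure (resp.\ a nonmeager Borel set), proved by picking a direction avoiding all chord directions and applying Fubini (resp.\ Kuratowski--Ulam) --- is correct and does verify conditions (2) and (3) of Theorem \ref{main-ideals}, in both the measure and the category case. The one place where you fall short of the statement is condition (1): as you yourself note, with $|\cf|=\c$ it forces $Cof(\cn)=\c$ (resp.\ $Cof(\cm)=\c$), so your argument proves the corollary only under an MA-type hypothesis, whereas the corollary is stated outright and is in fact a ZFC theorem. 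The extra hypothesis is removable, and your own lemma is what removes it: condition (1) is only needed so that the recursion in the proof of Theorem \ref{main-ideals} can have length $\kappa=\sup_f Cof(\ci_f)$, i.e.\ run over a Borel base of that size. For the projections of the disc one can instead enumerate in order type $\c$ all pairs $(\theta,B)$ with $B$ a positive (nonmeager) Borel subset of the segment $p_\theta[D]$, together with all positive (nonmeager) Borel subsets of $D$, and run the transfinite recursion of length $\c$ exactly in the style of Theorems \ref{main-first} and \ref{main-two}: at each stage the point destined for $A$ must avoid a union of fewer than $\c$ chords together with fewer than $\c$ previously chosen excluded points, which your lemma guarantees is possible inside any positive Borel set, and the witness point to be kept outside the images must avoid fewer than $\c$ already-chosen image points inside a set of size $\c$. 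This gives the unconditional statement. (A minor remark: the fibres over the two endpoints of $p_\theta[D]$ are singletons rather than proper chords, but singletons are harmless in your covering argument, or can be absorbed into the set $Z$ of condition (2).)
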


In the same webpage \cite{mathoverflow-1} user Gowers gives positive answer for the following question
\begin{quote}[Gerald Edgar Aug 3 '11 at 13:57]
{\em (a) All projections but two are non-measurable? Or: (b) Projections in uncountably many directions measurable and projections in uncountably many other directions non-measurable?}
\end{quote}

The user of Mathoverflow asked:
\begin{quote}[answered Aug 3 '11 at 14:47 gowers]
 {\em I don't know what happens if we ask for continuum many measurable projections and continuum many non-measurable projections ...}
\end{quote}

We have simple answer for the last question:
\begin{fact}\label{fakt-1} There exists subset $A$ of the unit disc $D=\{ (x,y)\in\bbr^2:\; x^2 + y^2 \le 1\}$ such that $\c$ many projections on the real $l$ line are not Lebesgue measurable and do not have Baire property in $l$ and $\c$ many projections on the real line $l$ are Lebesgue measurable and have Baire property in the line $l$. 
\end{fact}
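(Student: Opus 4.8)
The plan is to reduce the claim to a statement about \emph{directions} and then to build $A=\Gamma\cup A'$ from two geometrically different pieces: a large circular arc $\Gamma$ which \emph{forces} the projection to be a full closed interval for continuum many directions, and a thin ``Bernstein-type'' piece $A'$ which \emph{spoils} measurability for continuum many other directions.

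For $\theta\in[0,\pi)$ write $\pi_\theta(x,y)=x\cos\theta+y\sin\theta$ for the orthogonal projection of $\bbr^2$ onto the line $\bbr\cdot(\cos\theta,\sin\theta)$; projections of $A$ onto parallel lines differ by a translation, so whether $\pi_\theta[A]$ is Lebesgue measurable and whether it has the Baire property in the target line depend only on $\theta$, and $\pi_\theta[D]=[-1,1]$. Hence it suffices to produce $A\subseteq D$ together with disjoint sets $\Theta_0,\Theta_1\subseteq[0,\pi)$ with $|\Theta_0|=|\Theta_1|=\c$ such that $\pi_\theta[A]$ is Borel for $\theta\in\Theta_0$ and $\pi_\theta[A]$ is neither Lebesgue measurable nor has the Baire property for $\theta\in\Theta_1$. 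Fix a small $\varepsilon>0$ and let $\Gamma=\{(\cos t,\sin t):t\in[-\varepsilon,\pi+\varepsilon]\}\subseteq S^1\subseteq D$, an arc slightly longer than a semicircle. Since $\pi_\theta$ restricted to $S^1$ attains its extreme values $\pm1$ only at the antipodal pair $\pm(\cos\theta,\sin\theta)$ and $\pi_\theta[\Gamma]$ is a compact interval, one checks that $\pi_\theta[\Gamma]=[-1,1]$ precisely when $\Gamma$ contains both of those points, which holds for all $\theta$ in a nondegenerate interval $\Theta_0$ around $0$ in the direction circle; while for all $\theta$ in $\Theta_1:=[2\varepsilon,\pi-2\varepsilon]$ one gets $\pi_\theta[\Gamma]=[m_\theta,1]$ with $m_\theta\in(-1,0]$, so $\pi_\theta[\Gamma]$ misses the nondegenerate gap $G_\theta:=[-1,m_\theta)$. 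Here $\Theta_0\cap\Theta_1=\emptyset$ and $|\Theta_0|=|\Theta_1|=\c$. The crucial observation is that, since $\Gamma\subseteq D$, for \emph{any} $B\subseteq D$ and any $\theta\in\Theta_0$ we have $\pi_\theta[\Gamma\cup B]=[-1,1]$, which is Borel; so the $\Theta_0$-directions are settled no matter what second piece we adjoin.

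It remains to choose $A'\subseteq D$ so that for each $\theta\in\Theta_1$ the set $\pi_\theta[A']$ has a Bernstein trace on some nondegenerate subinterval $J_\theta\subseteq G_\theta$. Then, as $G_\theta\cap\pi_\theta[\Gamma]=\emptyset$, we get $\pi_\theta[A]\cap J_\theta=\pi_\theta[A']\cap J_\theta$, a Bernstein set in the interval $J_\theta$, and hence $\pi_\theta[A]$ is Lebesgue nonmeasurable and lacks the Baire property, since a set that is measurable or has the Baire property cannot have a Bernstein trace on a nondegenerate interval. To obtain $A'$, take $X$ to be a circular arc of radius $1-\eta$, for a small $\eta\in(0,1-\cos\varepsilon)$, lying over the arc of $S^1$ complementary to $\Gamma$; then $X\subseteq D$, and a short estimate (using that the angle $\pi$ lies in the relevant angular range) shows that for every $\theta\in\Theta_1$ the interval $\pi_\theta[X]$ contains $-(1-\eta)$ and extends past $m_\theta$, so that $J_\theta:=\pi_\theta[X]\cap G_\theta$ is nondegenerate. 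Since a line meets a circle in at most two points, every fibre $\pi_\theta^{-1}[y]\cap X$ is finite; hence Theorem~\ref{main-first} applies to the family $\{\pi_\theta|_X:\theta\in\Theta_1\}$ on $X$, and because a union of fewer than $\c$ many finite sets has size $<\c$ regardless of the cofinality of $\c$, the regularity hypothesis of Theorem~\ref{main-first} is not actually needed for this application, so this step requires no extra set-theoretic assumption. The theorem yields $A'\subseteq X$ with $\pi_\theta[A']$ a Bernstein set in $\pi_\theta[X]$ for all $\theta\in\Theta_1$; restricting to $J_\theta$ completes the verification, and we put $A=\Gamma\cup A'$.

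I expect the one genuinely delicate point to be this last step: one must be sure that the projections of the ``bad'' piece really land in, and stay nonmeasurable on, the gap left open by $\Gamma$. Naivety is not allowed here, because---as recalled in the abstract---there exist finite-to-one continuous surjections of $[0,1]$ that carry \emph{every} Bernstein set onto all of $[0,1]$, so projecting an arbitrary Bernstein set need not give a nonmeasurable image; the simultaneous diagonalisation built into Theorem~\ref{main-first}, which handles all directions $\theta\in\Theta_1$ at once, is exactly what is needed. (An elementary alternative avoiding Theorem~\ref{main-first}: place an affinely rescaled Bernstein set of reals on a short chord of $D$ near the gap, $A'=\{(x,-1+\delta):x\in B_0\}$ with $B_0$ Bernstein in $[-w,w]$ and $w=\sqrt{2\delta-\delta^2}$; then $\pi_\theta[A']=\cos\theta\cdot B_0+(\delta-1)\sin\theta$ is an affine image of a Bernstein set, hence Bernstein in its host interval, and the bound $(1-\delta)\sin\theta+w|\cos\theta|\le1$ lets one check that this interval lies inside $G_\theta$ for all $\theta$ in a small punctured neighbourhood of $\pi/2$, which is then taken as $\Theta_1$.) Everything else---the disjointness and cardinality of $\Theta_0,\Theta_1$, and that $X$, or the chord, projects into the gap---is routine planar geometry.
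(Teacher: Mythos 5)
Your proposal is correct, but your primary route is noticeably heavier than the paper's. The paper simply writes down an explicit set: $A=(B\times\{0\}\cap[-\tfrac{\sqrt3}{4},\tfrac{\sqrt3}{4}]\times\{0\})\cup(\bbr\times\{\tfrac12\}\cap D)$ with $B$ a Bernstein set in $\bbr$; here the long chord at height $\tfrac12$ plays exactly the role of your arc $\Gamma$ (its projection swallows the projection of the Bernstein segment for an interval of directions, giving a Borel projection), while for another interval of directions the affine image of $B$ sticks out past the chord's projection on a nondegenerate interval, so the projection has a Bernstein trace there and is nonmeasurable without the Baire property. Your parenthetical ``elementary alternative'' (an affinely rescaled Bernstein set on a short chord near the bottom, shielded by $\Gamma$) is essentially this construction, only with the covering piece being a circular arc instead of a parallel chord. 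Your main route instead invokes Theorem \ref{main-first} for the continuum-sized family $\{\pi_\theta|_X:\theta\in\Theta_1\}$ on an inner arc $X$; this works, and your observation that the regularity hypothesis can be dropped because the fibres are finite (a union of fewer than $\c$ finite sets has size $<\c$) is both correct and necessary, since the Fact is a ZFC statement while Theorem \ref{main-first} as stated assumes $\c$ regular. What the diagonalisation buys is more than is needed -- simultaneous genuine Bernstein images in the full intervals $\pi_\theta[X]$ for all bad directions -- whereas the paper gets by with a single Bernstein set of reals and elementary planar geometry; both constructions, yours included, also give an $A$ that is null and meager in the plane, which is the extra feature the paper records in view of the MathOverflow question.
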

\begin{proof} It is enough to define $A\subseteq D$ as follows:
$$
A = (B \times \{0\} \cap \bigg[-\frac{\sqrt{3}}{4},\frac{\sqrt{3}}{4}\bigg] \times \{0\} )\cup (\bbr \times \bigg\{ \frac{1}{2} \bigg\} \cap D),
$$
where $B\subseteq \bbr$ is a Bernstein set in the real line $\bbr$. Here we have $A\in \cn(\bbr^2)\cap \cm(\bbr^2)$.
\end{proof}

\begin{remark} The family of all projections are indexed by elements of $[0,\pi]$ of directions. And then in the above Fact \ref{fakt-1} the set of directions $\alpha\in [0,\pi]$ for which $\pi_\alpha[A]$ is measurable,  has positive Lebesgue measure in $[0,\pi],$ but less than $\pi$.
\end{remark}

In contrast to the previous Remark, we show the following Theorem.
\begin{theorem}\label{circle-parametrisation}  Let $X$, $Y$ and $Y_{\alpha}$ for $\alpha\in Y$ be a polish spaces, $\c$ be regular and $\{f_{\alpha}:\alpha\in Y\}$ a family of functions such that for all distinct $\alpha,\beta \in Y$ we have 
\begin{itemize}
    \item $\forall y\in Y_{\alpha}$ $|f_{\alpha}^{-1}[y]|=\c$
    \item $\forall y\in Y_{\alpha}$ and $y'\in Y_{\beta}$ $|f_{\alpha}[y]\cap f_{\beta}[y']|<\c$
\end{itemize}
Then there exists a subset $A\subseteq  X$ and disjoint Bernstein sets $F,G\subseteq Y$ such that $Y=F\cup G$ and
$$
F=\{ \alpha\in Y: f_\alpha[A]=Y_{A}\}$$
$$
G=\{ \alpha \in Y: f_\alpha[A] \text{ is Bernstein in }Y_{\alpha}\}.
$$
\end{theorem}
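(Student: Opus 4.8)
The plan is to combine the two construction techniques already developed: the one from Theorem~\ref{main-first} (which realizes Bernstein images by a transfinite recursion over the list of all perfect sets in each $Y_\alpha$) together with a simultaneous Bernstein-splitting of the parameter space $Y$ itself into the pieces $F$ and $G$. The key observation is that the second hypothesis --- that distinct fibers $f_\alpha^{-1}[y]$ and $f_\beta^{-1}[y']$ meet in a set of size $<\c$ --- is exactly what lets us commit a single point of $X$ to controlling the image under one particular $f_\alpha$ without thereby determining its image under the other $f_\beta$'s.

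First I would fix an enumeration $Y=\{\alpha_\gamma:\gamma<\c\}$ of the parameter space (using $|Y|=\c$, which follows from $Y$ being an uncountable Polish space) and, for each $\alpha\in Y$, an enumeration $\mathrm{Perf}(Y_\alpha)=\{P^\alpha_\eta:\eta<\c\}$ as in Theorem~\ref{main-first}. I would then run a transfinite recursion of length $\c$ in which, at stage $\gamma$, I decide whether $\alpha_\gamma$ goes into $F$ or into $G$ --- say according to whether $\gamma$ lies in a fixed bipartition of $\c$ into two cofinal pieces $S_F, S_G$ (or, better, handle $F$ and $G$ by a bookkeeping function so that every requirement is addressed cofinally often). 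For each $\gamma\in S_G$ and each $\eta<\c$ I place into $A$ a point $a$ of $f_{\alpha_\gamma}^{-1}[P^{\alpha_\gamma}_\eta]$ and a point $b\in P^{\alpha_\gamma}_\eta$ into a ``forbidden'' set $D_{\alpha_\gamma}\subseteq Y_{\alpha_\gamma}$, exactly as in Theorem~\ref{main-first}, while simultaneously making sure no already-chosen point of $A$ maps under $f_{\alpha_\gamma}$ into $D_{\alpha_\gamma}$. For each $\gamma\in S_F$ I must instead guarantee $f_{\alpha_\gamma}[A]=Y_{\alpha_\gamma}$: since every fiber $f_{\alpha_\gamma}^{-1}[y]$ has size $\c$ and (by the cross-fiber hypothesis) meets the union of all previously issued ``avoid'' sets only in a set of size $<\c$, I can always pick a point of $A$ in that fiber that respects all earlier constraints; iterating over a cofinal set of stages for each fixed $\alpha_\gamma\in F$ lets me hit every point of $Y_{\alpha_\gamma}$.

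The bookkeeping that makes this work is the delicate part. At each stage fewer than $\c$ points have been placed in $A$ and fewer than $\c$ points have been declared forbidden for each relevant $f_\beta$; by regularity of $\c$ the total number of constraints active at stage $\gamma$ is $<\c$. When I choose a new point $a\in f_{\alpha_\gamma}^{-1}[P^{\alpha_\gamma}_\eta]$ for a $G$-requirement, I need $f_\beta(a)$ to avoid the forbidden set $D_\beta$ for every $\beta$ with $\alpha_\beta\in G$ processed so far; each such $D_\beta$ has size $<\c$, and $f_\beta^{-1}[D_\beta]\cap f_{\alpha_\gamma}^{-1}[P^{\alpha_\gamma}_\eta]$ has size $<\c$ because each singleton-fiber $f_\beta^{-1}[y]$ meets $f_{\alpha_\gamma}^{-1}[\,\cdot\,]$ in $<\c$ points (this is where the second hypothesis is used in full force). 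So $f_{\alpha_\gamma}^{-1}[P^{\alpha_\gamma}_\eta]$, being of size $\c$ with $\c$ regular, is not covered by these $<\c$-many small sets, and $a$ exists; dually for $b$.

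The main obstacle I anticipate is the interaction between the $F$-requirements and the $G$-requirements: an $F$-requirement wants to throw lots of points into $A$ (to surject onto $Y_{\alpha_\gamma}$), and these points must not accidentally land in some forbidden set $D_\beta$ protecting a $G$-requirement, nor fill up a fiber that a later $G$-requirement needs to keep partly empty --- but note a $G$-requirement never needs a fiber to be empty, only that $f_{\alpha_\gamma}[A]$ misses a dense $G_\delta$-sized chunk, which is controlled through $D_{\alpha_\gamma}$, not through $A$. So the genuinely load-bearing point is that whenever I add a point $a$ to $A$ for any reason, I can still keep $f_\beta(a)\notin D_\beta$ for all active $G$-parameters $\beta$; since $\bigcup_\beta f_\beta^{-1}[D_\beta]$ restricted to whatever fiber I am working in has size $<\c$, a legal $a$ always exists. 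Once the recursion runs to length $\c$, setting $F=\{\alpha\in Y: \text{assigned to }S_F\}$ and $G$ its complement, one checks in one line each that $f_\alpha[A]=Y_\alpha$ for $\alpha\in F$, that $f_\alpha[A]$ is Bernstein in $Y_\alpha$ for $\alpha\in G$ (it meets every perfect set by the $a$'s and misses a point of every perfect set by the $b$'s), and that $F$, $G$ are themselves Bernstein in $Y$ provided the assignment of stages to $S_F$, $S_G$ was itself arranged to meet every perfect subset of $Y$ --- which one secures by interleaving, into the same recursion, the standard Bernstein construction for the partition $Y=F\cup G$.
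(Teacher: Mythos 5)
Your proposal follows essentially the same route as the paper's proof: one transfinite recursion of length $\c$ which simultaneously (i) assigns every $\alpha\in Y$ to $F$ or to $G$, interleaving the standard Bernstein requirements so that both $F$ and $G$ meet every perfect subset of $Y$, (ii) for each $G$-parameter puts into $A$ a witness whose image meets each $P^{\alpha}_{\eta}$ and reserves a forbidden point of $P^{\alpha}_{\eta}$ (your $D_\alpha$, the paper's $B_\xi$) that is protected from all later choices, and (iii) for each $F$-parameter hits every point of $Y_\alpha$ along a cofinal set of stages; all choices are justified by regularity of $\c$, fibers of size $\c$, and the $<\c$ intersection of fibers of distinct functions. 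This is exactly the paper's conditions and verification, so structurally the proof is right.

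One step is misstated, though, and as written it is false: you claim that $f_\beta^{-1}[D_\beta]\cap f_{\alpha_\gamma}^{-1}[P^{\alpha_\gamma}_\eta]$ has size $<\c$ ``because each singleton-fiber meets $f_{\alpha_\gamma}^{-1}[\cdot]$ in $<\c$ points''. The preimage $f_{\alpha_\gamma}^{-1}[P^{\alpha_\gamma}_\eta]$ is a union of $\c$ many fibers, and a single fiber $f_\beta^{-1}[d]$, which has size $\c$, can be entirely contained in it; so the intersection can have size $\c$. The hypothesis only gives smallness fiber by fiber, and that is what the counting argument must use: first choose the target value $a^{*}\in P^{\alpha_\gamma}_\eta$ avoiding the $<\c$ many already forbidden points of $D_{\alpha_\gamma}$ (and, for an $F$-requirement, the prescribed point $y\in Y_{\alpha_\gamma}$ is simply given), and only then choose $a$ inside the single fiber $f_{\alpha_\gamma}^{-1}[a^{*}]$, where the sets $f_\beta^{-1}[d]\cap f_{\alpha_\gamma}^{-1}[a^{*}]$, for the $<\c$ many active pairs $(\beta,d)$ with $\beta\ne\alpha_\gamma$, each have size $<\c$ and hence cannot cover the fiber, by regularity of $\c$. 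This is precisely how the paper phrases the choice (pick $a^{*}\in P_\gamma^{\xi}$, then $a\in f_{\gamma}^{-1}[a^{*}]$). With this local repair, and with your bookkeeping made explicit so that before any stage only $<\c$ requirement instances (hence $<\c$ chosen and forbidden points) have been processed, your argument coincides with the paper's.
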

\begin{proof} Let us enumerate the following sets:
	\begin{itemize}
	    \item $Y=\{x_{\alpha}:\alpha<\c \}$
		\item $Y_{\alpha} = \{ y_{\alpha}^{\gamma}: \alpha\leq \gamma <\c  \}$
		\item $ Perf(Y_{\alpha})= \{ P_{\alpha}^{\gamma}: \alpha\leq \gamma <\c \}$
		\item $ Perf(Y)=\{P_{\alpha}:\alpha<\c\}$
	\end{itemize}
	Now, by transfinite recursion we define
	$$
	\{  (A_\xi, B_\xi, F_\xi, G_\xi, ) : \xi < \c  \}
	$$
	such that for any $\xi,\gamma<\c$ we have
	\begin{enumerate}
		\item $A_\xi\subset A_\gamma, B_\xi\subset B_\gamma, F_\xi\subset F_\gamma, G_\xi\subset G_\gamma $ for $\xi<\gamma$
		\item $|A_\xi|=|B_\xi|=|F_\xi|=|G_\xi|=\omega|\xi|$
		\item $A_\xi \subset X, B_\xi \subset \bigcup_{\alpha\in Y}Y_{\alpha}$
		\item $F_\xi, G_\xi \subset Y$
		\item $F_\xi\cap G_\xi =\emptyset$
		\item $x_\xi \in F_\xi \cup G_\xi$
		\item $F_\xi \cap P_\xi\neq \emptyset \neq G_\xi \cap P_\xi$
		\item $(x_\gamma \in F_\xi \land \gamma\leq\xi)\longrightarrow(\{y_{\gamma}^{\sigma}:\gamma\leq \sigma \leq \xi\}\subset\pi_{x_{\gamma}}[A_\xi])$
		\item $(x_\gamma \in G_\xi \land \gamma\leq\xi) \longrightarrow (f_{x_{\gamma}}[A_\xi]\cap B_\xi=\emptyset)$
		\item $(x_\gamma \in G_\xi \land \gamma\leq\xi)\longrightarrow(f_{x_{\gamma}}[A_\xi]\cap P_\gamma^\sigma\neq\emptyset\text{ for each }\gamma\leq \sigma \leq \xi)$
		\item $(x_\gamma \in G_\xi \land \gamma\leq\xi)\longrightarrow(\forall \gamma\leq \sigma \leq \xi P_\gamma^\sigma\cap B_\xi\neq \emptyset)$
		\item $(x_\gamma \in F_\xi \land \gamma\leq\xi)\longrightarrow(\forall \gamma\leq \sigma \leq \xi P_\gamma^\sigma\cap B_\xi= \emptyset)$
	\end{enumerate}
	The construction goes as follows: In step $\xi<\c$ firstly - if possible- we decide whether $x_\xi\in F_\xi$ or $x_\xi\in G_\xi$. Then we choose point for $F_{\xi}$ and point for $G_{\xi}$ such that $F_\xi\cap P_\xi \neq \emptyset$ and $ G_\xi\cap P_\xi \neq \emptyset $ paying attention to condition (9). Next for each $\gamma<\xi$ with $x_{\gamma}\in F_{\xi}$ we find a point $a\in f^{-1}_{\gamma}[y_{\gamma}^{\xi}]$ such that (9) holds. Next for each perfect set $P_{\gamma}^{\xi}$ such that $\gamma\leq\xi$ and $x_{\gamma}\in G_{\xi}$ we find point $a^{*}\in P_{\gamma}^{\xi}$ and then point $a\in f_{\gamma}^{-1}[a^{*}]$ satisfying (9). All such choosen points we put to $A_{\xi}$ and all these points can be found because $\pi_{\gamma}^{-1}[a^{*}]$ is of cardinality $\c$ which is regular, $|B_{\xi}|<\c$ and intersection of any two is of cardinality less then $\c$. Next for each perfect set $P_{\gamma}^{\xi}$ such that $\gamma\leq\xi$ and $x_{\gamma}\in G_{\xi}$ we find point $b\in P_{\gamma}^{\xi}\
setminus f_{\gamma}[A_{\xi}]$ and all these we put to $B_{\xi}$.
	
	Thus all points necessary for construction can be chosen.  Now let
	\begin{itemize}
		\item $A = \bigcup_{\xi<\c} A_\xi$,
		\item $F = \bigcup_{\xi<\c} F_\xi$ 
		\item $G = \bigcup_{\xi<\c} G_\xi$
	\end{itemize}
	and we have the following:  (5),(6) implies that $F$ and $G$ are decomposition of $Y$;  (5),(7) implies that $F$ and $G$ are Bernstein sets;  (8),(12) implies that for any $x\in F$ we have $f_{x}[A]=Y_{\alpha}$; finally conditions (9),(10),(11) implies that for any $x\in G$ we have $f_{x}[A]$ is a Bernstein set.
\end{proof}
\begin{remark}
The above proof works if we loose the assumption of regularity of $\c$ but assume that $f_{\beta}^{-1}[y]\cap f_{\alpha}^{-1}[y']$ is countable for any $\alpha,\beta<\c$ and distinct $y\in Y_{\alpha}$ and $y'\in Y_{\beta}$.
\end{remark}

\begin{remark} In the Theorem \ref{circle-parametrisation} we can replace real plane by $n$-dimensional euclidean space $n\ge 2$, unit disc $D\subseteq \bbr^2$ by unit ball, tangent lines to sphere $S^1$ in $\bbr^2$ by $n-1$-dimensional hyperplanes and finally segment $[0,\pi]$ by $n-1$-dimensional rectangle which parameterised unit sphere $S^{n-1}\subseteq \bbr^n$ of the our unit ball. 
\end{remark}

\begin{fact}\label{lusin} Let $n\ge 2$ be a fixed integer. Then every projection $\pi$ of the Lusin set $A\subseteq B(0,1) \subset \bbr^n$ into tangent hyperplane $l$ to $B(0,1)$ is Lusin set in $\pi[B(0,1)]$. The same result is true if we replace Lusin set by Sierpiński set.
\end{fact}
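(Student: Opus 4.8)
The plan is to reduce the statement to two classical product facts: the product of a meager subset of $\bbr^{n-1}$ with a line is meager in $\bbr^n$, and the product of a Lebesgue null subset of $\bbr^{n-1}$ with a line is null in $\bbr^n$. First I would normalise the geometry. The orthogonal projection onto an affine hyperplane differs by a translation from the orthogonal projection onto the parallel linear hyperplane, and that linear hyperplane can be rotated onto $\bbr^{n-1}\times\{0\}$; since translations and rotations are isometries of $\bbr^n$, they preserve both the $\sigma$-ideal of meager sets and the $\sigma$-ideal of Lebesgue null sets and carry Lusin sets to Lusin sets and Sierpiński sets to Sierpiński sets, so we may assume that $\pi\colon\bbr^n\to\bbr^{n-1}$ is the coordinate projection $(x_1,\dots,x_n)\mapsto(x_1,\dots,x_{n-1})$. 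Then $\pi[B(0,1)]$ is the open $(n-1)$-dimensional unit ball $U$, which is a perfect Polish space, so the phrase ``Lusin (Sierpiński) set in $\pi[B(0,1)]$'' is meaningful, and $\pi\restricted B(0,1)$ is the restriction to $B(0,1)\subseteq\bbr^{n-1}\times\bbr$ of the product projection onto the first coordinate.

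The key step is that $\pi$ pulls small sets back to small sets. Let $N\subseteq U$. Since $U$ is open in $\bbr^{n-1}$, $N$ is meager in $U$ if and only if it is meager in $\bbr^{n-1}$, and likewise for being null. If $N$ is meager in $\bbr^{n-1}$, writing $N=\bigcup_{k\in\omega}N_k$ with each $N_k$ nowhere dense, each $N_k\times\bbr$ is nowhere dense in $\bbr^n$, hence $\pi^{-1}[N]=N\times\bbr$ is meager in $\bbr^n$ (one half of the Kuratowski–Ulam theorem). If $N$ is null in $\bbr^{n-1}$, then $\pi^{-1}[N]=N\times\bbr$ is null in $\bbr^n$ by Fubini's theorem. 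In both cases $\pi^{-1}[N]\cap B(0,1)$ is, respectively, meager or null in $\bbr^n$.

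Now fix a Lusin set $A\subseteq B(0,1)$; I claim $\pi[A]$ is a Lusin set in $U=\pi[B(0,1)]$. First, $\pi[A]$ is uncountable: for each $y\in U$ the fibre $\pi^{-1}[y]\cap B(0,1)$ lies on a straight line, which is nowhere dense in $\bbr^n$ since $n\ge 2$, so $A\cap\pi^{-1}[y]$ is countable; thus $A=\bigcup\{A\cap\pi^{-1}[y]:y\in\pi[A]\}$ is a union of $|\pi[A]|$ many countable sets, and as $|A|\ge\aleph_1$ this forces $|\pi[A]|\ge\aleph_1$. Second, for any meager $N\subseteq U$ the set $\pi^{-1}[N]\cap B(0,1)$ is meager in $\bbr^n$ by the previous paragraph, so $A\cap\pi^{-1}[N]$ is countable, whence $\pi[A]\cap N\subseteq\pi[A\cap\pi^{-1}[N]]$ is countable. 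This settles the Lusin case, and the Sierpiński case is the same argument verbatim with ``meager'' replaced by ``Lebesgue null'', using the Fubini half of the key step and the fact that a straight line has $n$-dimensional measure zero for $n\ge 2$.

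I do not expect a genuine obstacle here: the entire content is the product estimate of the second paragraph, which is classical, and everything else is bookkeeping — identifying meagerness and nullity inside the open ball $U$ with meagerness and nullity in $\bbr^{n-1}$, checking that $\pi[B(0,1)]$ is a perfect Polish space so that the target notions make sense, and observing that passing to the coordinate projection via isometries changes nothing relevant.
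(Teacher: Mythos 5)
Your proposal is correct and follows essentially the same route as the paper: pull a meager (resp.\ null) subset of the hyperplane back under $\pi$, observe the preimage is meager (resp.\ null) in the ball --- the paper cites Kuratowski--Ulam where you argue directly with nowhere dense products and Fubini --- and conclude from the Lusin (Sierpi\'nski) property of $A$ that the image meets the given small set in a countable set. Your additional bookkeeping (isometric normalisation and the check that $\pi[A]$ is uncountable via countable fibres over lines) only fills in details the paper's short proof leaves implicit.
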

\begin{proof} We show this fact for Lusin set only. Let $A\subseteq B(0,1)$ be Lusin set and $P\subseteq \pi[B(0,1)]$ be a meager set. Then using Kuratowski-Ulam Theorem $B(0,1) \cap \pi[P]$ is also meager subset of the unit ball $B(0,1)$. Then the intersection $A\cap \pi^{-1}[P]$ is countable and thus $\pi[A] \cap P$ is countable also.
\end{proof}

Using the above Fact \ref{lusin} we can show the following consistency result.
\begin{fact}\label{coheny} It is relatively consistent with $ZFC$ theory that $\neg CH$ and for every integer $n\ge 2$ there exists Baire nonmeasurable subset $A$ of the cardinality less than $\c$ of the unit ball $B\subseteq \bbr^n$ such that projection $\pi[A]$ into any tangent to $B$ hyperplane does not have Baire property. The same result is true in the case of Lebesgue measure. 
\end{fact}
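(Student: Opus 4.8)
I propose the following plan; it parallels the technique of Propositions 1--2 above (special generic reals) combined with \emph{Fact \ref{lusin}}.

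The plan is to realize the category version in the Cohen model and the measure version in the random real model, taking for $A$ a Lusin, respectively a Sierpiński, subset of the ball of size $\aleph_1$. We use two inputs. The first is the classical observation that a Lusin set in a Polish space cannot have the Baire property, and dually that a Sierpiński set in a Polish space cannot be Lebesgue measurable: were such a set to have the property (be measurable) it would have to be non-meager (non-null), hence comeager on some nonempty open set (of positive measure on some closed set), and one could then find inside it a perfect nowhere dense (perfect null) subset, which is uncountable and meager (null), contradicting the defining property of the set. The second is \emph{Fact \ref{lusin}}: an orthogonal projection $\pi$ of a Lusin (Sierpiński) set $A\subseteq B(0,1)\subseteq\bbr^n$ onto a tangent hyperplane $l$ is again a Lusin (Sierpiński) set, namely inside $\pi[B(0,1)]$. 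With these in hand, the task reduces to producing, in a model of $\neg CH$, a Lusin (Sierpiński) set $A\subseteq B(0,1)$ of cardinality $<\c$: then $A$ is Baire nonmeasurable (non-measurable) in $B$ by the first input, and each $\pi[A]$ is such in $l$ by combining both inputs.

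For the category half, let $V\models CH$ and let $V[G]$ be obtained by adding $\w_2$ Cohen reals; then $\c=\aleph_2$, so $\neg CH$ holds. Fix $n\ge 2$. Since the Cohen algebra of the Polish space $B(0,1)\subseteq\bbr^n$ is (up to isomorphism) the usual Cohen forcing, we may regard the generic reals as points $\{c_\alpha:\alpha<\w_2\}$ of $B(0,1)$ and set $A=\{c_\alpha:\alpha<\w_1\}$. If $M\subseteq B(0,1)$ is a meager Borel set in $V[G]$, then, the forcing being ccc and a Borel code being a real, $M$ is coded in $V[G\restricted a]$ for some countable $a\subseteq\w_2$; for $\alpha\in\w_1\setminus a$ the point $c_\alpha$ is Cohen over $V[G\restricted a]$ and so avoids $M$. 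Hence $A\cap M$ is countable, while $A$ is uncountable (the $c_\alpha$ are distinct and no cardinals are collapsed), so $A$ is a Lusin set in $B(0,1)$ of size $\aleph_1<\c$, and therefore lacks the Baire property in $B$. Now fix a tangent hyperplane $l$ with orthogonal projection $\pi$ onto it. By \emph{Fact \ref{lusin}}, $\pi[A]$ is a Lusin set in the closed $(n-1)$-ball $\pi[B(0,1)]\subseteq l$, hence lacks the Baire property there. The interior $W$ of $\pi[B(0,1)]$ is a nonempty open subset of $l$ and the boundary sphere of $\pi[B(0,1)]$ is nowhere dense in $l$; since $\pi[A]$ meets that nowhere dense sphere in a countable set, $\pi[A]\cap W$ is an uncountable Lusin set in $W$, so $\pi[A]$ fails the Baire property in $l$ as well. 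As $n$ and $l$ were arbitrary, $V[G]$ witnesses the category half. The Lebesgue-measure half is obtained by the same argument from the model adding $\w_2$ random reals: the first $\w_1$ of the associated random points of $B(0,1)$ form a Sierpiński set (every null Borel set is coded in a countable subextension and is avoided by the later random reals), hence a non-measurable one, and by the Sierpiński form of \emph{Fact \ref{lusin}} (Fubini replacing Kuratowski--Ulam) each of its tangent projections is a Sierpiński set in the corresponding $\pi[B(0,1)]$, hence non-measurable in $l$.

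The point needing the most attention is the last step of the category (and measure) argument: upgrading ``$\pi[A]$ is a Lusin set in $\pi[B(0,1)]$'' to ``$\pi[A]$ lacks the Baire property in the ambient hyperplane $l$''. This works precisely because $\pi[B(0,1)]$ is full-dimensional in $l$ --- its interior is a genuine open piece of $l$, on which the Lusin property is retained --- as opposed to, say, a lower-dimensional affine slice of $l$, where category measured in $l$ and category measured in the slice diverge. The remaining ingredients, namely the non-measurability of Lusin and Sierpiński sets and the fact that the first $\w_1$ Cohen (random) reals form a Lusin (Sierpiński) set, are standard; the proof is essentially an assembly of these around \emph{Fact \ref{lusin}}.
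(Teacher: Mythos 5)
Your proposal is correct and follows essentially the same route as the paper: in the Cohen (resp.\ random) extension of a model of CH the first $\w_1$ generic points form a Lusin (resp.\ Sierpiński) set of size $\aleph_1<\c$ in the ball, and Fact~\ref{lusin} transfers this to every tangent projection, giving failure of the Baire property (resp.\ of measurability). The only cosmetic difference is that the paper places the Lusin set inside the ball by taking $B\cap(\{c_\xi:\xi<\w_1\}+\bbq^n)$, i.e.\ rational translates of the Cohen reals, whereas you realize the generics directly as Cohen (random) points of the ball.
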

\begin{proof} Let $V\models GCH$ be any transitive model of $ZFC$ theory. Then adding $\omega_2$ independent Cohen reals $\cc_{\omega_2}$ to $V$ then $A=B\cap (\{ c_\xi:\xi<\omega_1\} + \bbq^n )$ is required set (here $\bbq$ are all rational numbers).
\end{proof}


Alexander Osipov asked \cite{AO} question "It is true that for every Bernstein set in $\bbr$ there are countable many continuous functions $\{ f_n:n\in\w\}$ such that $\bigcup_{n\in\w} f_n[B] = \bbr.$"

The answer is given by \cite{CMR}.
\begin{theorem}\label{osipov} There is function $f:[0,1]\to [0,1]$ such that for every Bernstein set $B\subseteq [0,1]$ the following is true $f[B] = [0,1].$
\end{theorem}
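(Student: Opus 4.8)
The plan is to construct $f$ as a "coding" map that reads off, from the binary (or rather $[0,1]$-valued) expansion of a real $x$, enough information to hit every point of $[0,1]$ already inside any single perfect set. The key structural fact is that $[0,1]$ can be partitioned (or more conveniently, covered in a controlled way) into continuum-many pieces each of which meets every perfect set; equivalently, there is a surjection-rich structure so that every perfect set $P$ carries a full copy of $[0,1]$ under $f$. First I would fix a continuous surjection $g:\cantor\to[0,1]$ and a homeomorphism $[0,1]\setminus(\text{countable set})\cong \cantor$ in the background, so that it suffices to build a continuous $f:\cantor\to\cantor$ with the analogous property, since a Bernstein set in $[0,1]$ restricts to a Bernstein set of the relevant subspace.

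Next I would use the standard splitting of $\cantor$ as $\cantor\times\cantor$ via interleaving even and odd coordinates, and set $f(x)= \pi_2(x)$ where $\pi_2$ is projection onto the second coordinate — but this alone does not work, since a Bernstein set need not project onto everything (see the second Remark after Theorem~\ref{main-first}). The fix is to let the first coordinate select, in a continuous way, one of continuum-many homeomorphisms of the second factor; concretely, I would arrange a continuous map $F:\cantor\times\cantor\to\cantor$, $F(s,t)=h_s(t)$, where $\{h_s:s\in\cantor\}$ is a family of homeomorphisms of $\cantor$ such that for every perfect $P\subseteq\cantor\times\cantor$ and every $y\in\cantor$ there is $(s,t)\in P$ with $h_s(t)=y$. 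To get this, note that a perfect set $P$, being uncountable, has a section $P_s=\{t:(s,t)\in P\}$ that is uncountable — hence contains a perfect set — for perfectly many $s$ (by a Fubini-type category argument on a perfect set, or by passing to a perfect subset of $P$ that is a "rectangle-like" set). Then I would choose the family $\{h_s\}$ so that across these perfectly many $s$'s the images $h_s[\,\text{perfect subset of }P_s\,]$ sweep out all of $\cantor$; since a perfect subset of $\cantor$ can be mapped onto $\cantor$ by a homeomorphism-restriction, and there are continuum-many candidate $s$'s, a bookkeeping argument assigning targets $y_\xi$ ($\xi<\c$) to distinct base points $s_\xi$ completes it.

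The main obstacle — and the step I would spend the most care on — is making the dependence on $s$ genuinely \emph{continuous} while still controlling the images over every perfect set simultaneously; a naive transfinite construction of the $h_s$ destroys continuity. The way around this, which I expect the authors' cited argument from \cite{CMR} to use, is to not build $f$ by recursion on perfect sets at all, but instead to exhibit a single explicit continuous map with a built-in universality property: e.g. let $f$ act on $x\in\cantor$ by using infinitely many disjoint blocks of coordinates of $x$ to successively "refine a guess" at an arbitrary target $y$, so that on any perfect set $P$ one can, going through $P$'s defining tree, steer the blocks of $x$ to force the output to converge to a prescribed $y$. Concretely: write $\w=\bigsqcup_n I_n$ into consecutive finite blocks, let $f(x){\restricted}[k]$ be determined by $x{\restricted}(I_0\cup\dots\cup I_{k-1})$ in such a way that for every node $\sigma$ in the tree of $P$ and every finite string $\tau$, some extension of $\sigma$ in $P$'s tree produces output extending $\tau$; then a fusion argument inside $P$ builds, for each $y$, a branch of $P$ mapping to $y$. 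Verifying this universality of the tree-coding is the real content; the reductions to $\cantor$, the handling of the countable discrepancy between $[0,1]$ and $\cantor$, and the final sentence "$f[B]=[0,1]$ for every Bernstein $B$" are then routine.
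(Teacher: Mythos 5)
Your plan rests on the wrong key property, and on a property that is in fact unattainable. You aim for a continuous $f$ such that for every perfect set $P$ and every $y$ there is $x\in P$ with $f(x)=y$, i.e. $f[P]$ is everything for every perfect $P$. For a continuous map this is impossible: each fiber $f^{-1}(y)$ is closed, and if it met every perfect set its open complement would contain no perfect set, hence be empty, forcing $f$ to be constant. Concretely, continuity at any point $x_0$ gives a basic clopen set (or interval) $U\ni x_0$ whose image has diameter less than $1/2$; $U$ contains a perfect set whose image is then a proper subset, so the ``steer the blocks inside the tree of $P$'' fusion argument cannot work when $P\subseteq U$. Moreover, even ignoring continuity, the property is not sufficient for the theorem: partition $[0,1]$ into continuum many pairwise disjoint Bernstein sets $\{S_y:\,y\in[0,1]\}$ and let $f(x)=y$ iff $x\in S_y$; then every perfect set maps onto $[0,1]$ (every fiber meets every perfect set), yet $B=S_{1/2}$ is a Bernstein set with $f[B]=\{1/2\}$. (Also, the Fubini-type claim that every perfect $P\subseteq\cantor\times\cantor$ has uncountable sections over perfectly many $s$ is false, e.g. for the graph of a homeomorphism of $\cantor$, all vertical sections are singletons.)

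The property that actually yields the theorem --- and the one this paper records in the Remark after Theorem~\ref{main-first}; note the theorem itself is quoted from \cite{CMR}, not proved here --- is that \emph{every fiber} $f^{-1}(y)$ \emph{contains a perfect set}: then any Bernstein set meets every fiber, so $f[B]=[0,1]$ (and this condition is also necessary, since a fiber containing no perfect set is avoided by some Bernstein set). Ironically, you had this in hand and discarded it: under the interleaving homeomorphism $\cantor\cong\cantor\times\cantor$, the projection onto the second factor has all fibers perfect, so Bernstein sets \emph{do} map onto everything under it; your objection confuses this situation with the small-fiber maps of Theorem~\ref{main-first}. All that remains is the routine transfer from $\cantor$ to $[0,1]$; alternatively, take $f=\pi_1\circ\gamma$ for a Peano curve $\gamma:[0,1]\to[0,1]^2$, whose fibers are closed and uncountable, hence contain perfect sets.
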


The function $f$ used in proof of the Theorem \ref{osipov} is far of differentiable function. In the same article \cite{CMR} has been proven  that the answer for Osipov question is false in case of function which fulfills so called condition $T_2$. Here we say that real function is $T_2$ if
$$
\{ y\in\bbr:\; |f^{-1}[\{y\}]| = \c\} \in \cn.
$$
\begin{theorem}\label{funkcje-T2} There exists a Bernstein set $A\subseteq \bbr$ such that $\bigcup\{f[A]:f\in\cf\}\ne\bbr$ for every countable collection $\cf$ of continuous functions satisfying the condition $T_2.$
\end{theorem}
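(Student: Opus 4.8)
The plan is to construct $A$ by transfinite recursion of length $\c$, diagonalising simultaneously against every perfect subset of $\bbr$ (to make $A$ a Bernstein set) and against every countable family of continuous $T_2$ functions. What makes this feasible is a counting observation: a continuous function $\bbr\to\bbr$ is determined by its restriction to $\bbq$, so there are only $\c$ of them, and hence only $\c$ countable families of such functions. I would therefore fix enumerations $Perf(\bbr)=\{P_\alpha:\alpha<\c\}$ and $\{\cf_\alpha=\{f^\alpha_n:n\in\w\}:\alpha<\c\}$ of all countable families of continuous $T_2$ functions (allowing repetitions, so that finite families are covered as well).

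The role of the $T_2$ hypothesis is this. For a continuous $f$ each fibre $f^{-1}[\{y\}]$ is closed, hence by the perfect set property either countable or of size $\c$; and $T_2$ says exactly that $N_f:=\{y:|f^{-1}[\{y\}]|=\c\}\in\cn$. Consequently, for each $\alpha$ the set $N_\alpha:=\bigcup_n N_{f^\alpha_n}$ is null, so $\bbr\setminus N_\alpha$ is co-null; being a measurable set of positive measure it contains a perfect set, whence $|\bbr\setminus N_\alpha|=\c$. This leaves room, at stage $\alpha$, to choose a ``target'' $y_\alpha\in\bbr\setminus N_\alpha$ all of whose fibres $(f^\alpha_n)^{-1}[\{y_\alpha\}]$ are countable, while also avoiding any prescribed set of size $<\c$.

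The recursion step at stage $\alpha<\c$, with $\langle a_\gamma,b_\gamma,y_\gamma:\gamma<\alpha\rangle$ already constructed, would go as follows. First pick
$$
y_\alpha\in(\bbr\setminus N_\alpha)\setminus\bigcup_{n\in\w}f^\alpha_n\left[\{a_\gamma:\gamma<\alpha\}\right],
$$
possible since the deleted part has size $<\c$. Put $S_\gamma:=\bigcup_{n\in\w}(f^\gamma_n)^{-1}[\{y_\gamma\}]$ for $\gamma\le\alpha$; each $S_\gamma$ is countable. Then pick
$$
a_\alpha\in P_\alpha\setminus\left(\{b_\gamma:\gamma<\alpha\}\cup\bigcup_{\gamma\le\alpha}S_\gamma\right),\qquad b_\alpha\in P_\alpha\setminus\left(\{a_\gamma:\gamma\le\alpha\}\cup\{b_\gamma:\gamma<\alpha\}\right),
$$
both possible because $|P_\alpha|=\c$ while we are deleting fewer than $\c$ points. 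Finally set $A:=\{a_\alpha:\alpha<\c\}$.

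It then remains to verify two things. First, $A$ is Bernstein: every perfect set is some $P_\alpha$, and then $a_\alpha\in A\cap P_\alpha$ while $b_\alpha\in P_\alpha\setminus A$ — no $a_\beta$ with $\beta\le\alpha$ equals $b_\alpha$ by the clause defining $b_\alpha$, and no $a_\beta$ with $\beta>\alpha$ equals $b_\alpha$ by the clause $a_\beta\notin\{b_\gamma:\gamma<\beta\}$. Second, given a countable family $\cf$ of continuous $T_2$ functions, write $\cf=\cf_\alpha$; then $y_\alpha\notin\bigcup\{f[A]:f\in\cf_\alpha\}=\bigcup_n f^\alpha_n[A]$, because $f^\alpha_n(a_\beta)=y_\alpha$ would force $a_\beta\in(f^\alpha_n)^{-1}[\{y_\alpha\}]\subseteq S_\alpha$, contradicting the choice of $a_\beta$ when $\beta\ge\alpha$ and the choice of $y_\alpha$ when $\beta<\alpha$; hence $\bigcup\{f[A]:f\in\cf\}\ne\bbr$. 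I expect the only genuinely delicate point to be the cardinality bookkeeping — keeping every forbidden set of size $<\c$ throughout: this uses $\mathrm{cf}(\c)>\w$ (so that a countable union of countable sets, and more generally a union of $<\c$-many sets of size $<\c$, stays small) and, crucially, the $T_2$ hypothesis, which is exactly what keeps each $N_\alpha$ null, hence $\bbr\setminus N_\alpha$ of full size, so that a legal $y_\alpha$ always exists. By contrast the function of Theorem \ref{osipov} has every fibre meeting every perfect set, so it is as far from $T_2$ as a function can be — which is why the two results do not clash.
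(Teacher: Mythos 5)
Your proposal is correct. Note, however, that the paper itself does not prove Theorem \ref{funkcje-T2}: it is quoted from \cite{CMR}, so there is no in-paper argument to compare against. Your construction is the natural self-contained one and all the key points are in order: there are only $\c$ continuous functions $\bbr\to\bbr$, hence only $\c$ countable $T_2$-families; for $y_\gamma$ outside the null set $N_\gamma$ each fibre $(f^\gamma_n)^{-1}[\{y_\gamma\}]$ is a closed set that is not of size $\c$, hence countable by the perfect set property; and the two-case check ($\beta<\alpha$ handled by the choice of $y_\alpha$, $\beta\ge\alpha$ by the choice of $a_\beta$ avoiding $S_\alpha$) closes the diagonalisation, while the $b_\alpha$'s give the co-Bernstein half. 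One small remark: your appeal to $\mathrm{cf}(\c)>\w$ (and the general principle that a union of $<\c$ many sets of size $<\c$ is of size $<\c$) is neither needed nor true in general when $\c$ is singular; in your recursion every union is either of $<\c$ many countable sets or of countably many sets of size $<\c$, and such unions have size $<\c$ outright in ZFC, so the bookkeeping stands without any extra hypothesis.
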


Now we want to prove some independent result of $ZFC$ theory which is analogous to the Theorem \ref{funkcje-T2}.

We recall the definition of completely nomeasurablility respect to the $\sigma$-ideal.

\begin{definition} Let $X$ be Polish space and $I\subseteq P(X)$ is $\sigma$-ideal with Borel base with singletons. We say that set $A\subseteq X$ is completely $I$-nonmeasurable if
$$
(\forall B\in Bor(X)\setminus I)\; A\cap B\ne \emptyset \land A^c\cap B\ne \emptyset.
$$
\end{definition}
Every completely $[X]^{\le\w}$-nonmeasurable set is Bernstein set and vice-versa. If $A\subseteq [0,1]$ is completely $\cn$-nonmeasurable then inner measure is zero and outer measure is equal to one. If $A$ is completely $\cm$-nomeasurable then in every nonempty set $U$ the set $A\cap U$ has no Baire property in $U.$

We say that $A\subseteq [0,1]$ is strongly null if for every sequence of positive reals $(\epsilon_n)_{n\in\w}$ there is sequence of intervals $(I_n)_{n\in\w}$ in $[0,1]$ such that
$$
A\subseteq \bigcup_{n\in\w} I_n \;\; \land \;\; (\forall n\in\w) (\lambda(I_n) <\epsilon_n).
$$
It is well known that in Laver model every strongly null set is countable. In [GMS] authors have proven that $X$ is strongly null if and only if for each meager set $M$ we have
$X+M\ne[0,1]$. This led to following definition.
We say that set $X\subseteq[0,1]$ is strongly meager if for every set $N$ of measure zero we have $N+X\ne[0,1]$. It is well known that in Cohen model strongly meager sets are countable.

Now we are ready to formulate the main theorem of this subsection.

\begin{theorem}\label{main-osipov} It is consistent with $ZFC$ that there exists $A\subseteq \bbr$ and $B\subseteq\bbr$ such that
\begin{itemize}
    \item $A$ is strongly null,
    \item $B$ is completely $\cm$-nonmeasurable.
\end{itemize}
and for every family of continuous functions $\cf$, $|\cf|<\c$ satisfying $\forall f\in\cf$ $B\subseteq dom(f)$ we have that $A\setminus \bigcup\{ f[B]:\; f\in\cf\}\ne\emptyset.$ 
\end{theorem}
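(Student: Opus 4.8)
The plan is to realise both sets inside the Cohen model and to obtain the conclusion from a plain cardinality count, so the only real content lies in producing the two sets with the right sizes. I would start from a ground model $V\models CH$ and pass to $V[G]$ by adjoining $\w_2$ Cohen reals $\{c_\alpha:\alpha<\w_2\}$, added as genuine points of $[0,1]$ (not of some fixed nowhere dense subset), so that in $V[G]$ one has $\c=\aleph_2$ and, by the usual computation for Cohen forcing, $non(\cm)=\aleph_1$.

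First I would let $A=\{c_\alpha:\alpha<\w_2\}$. The key point is the standard $c.c.c.$ nice-name argument: every meager Borel subset $M$ of $V[G]$ is already coded in an intermediate model $V[G\restricted a]$ with $a\in[\w_2]^{\le\w}$, so $c_\alpha\notin M$ whenever $\alpha\in\w_2\setminus a$, and hence $|A\cap M|\le\aleph_0$ for every meager $M$; thus $A$ is a Luzin set of cardinality $\c$. Then $A$ is concentrated on $\bbq$ — for open $U\supseteq\bbq$ the closed set $\bbr\setminus U$ is nowhere dense, so $A\setminus U=A\cap(\bbr\setminus U)$ is countable — and from concentration, covering $\bbq$ by short intervals and then the countable remainder by even shorter ones, one reads off that $A$ is strongly null.

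Next I would fix a non-meager set $X\subseteq\bbr$ with $|X|=non(\cm)=\aleph_1$, a countable base $\{U_n:n\in\w\}$ of nonempty open subsets of $\bbr$, and homeomorphisms $h_n\colon\bbr\to U_n$, and set $B=\bigcup_{n\in\w}h_n[X]$. Then $|B|\le\aleph_1<\c$ and $B\cap U_n$ is non-meager for every $n$. To check that $B$ is completely $\cm$-nonmeasurable, take $B'\in Bor(\bbr)\setminus\cm$; by the Baire property write $B'=U\triangle M$ with $U$ nonempty open and $M$ meager, pick $n$ with $U_n\subseteq U$, and note that $B'\cap U_n=U_n\setminus M$ is comeager in $U_n$ while $B\cap U_n$ is non-meager in $U_n$, so $B\cap B'\ne\emptyset$; moreover $B'\setminus B\ne\emptyset$ simply because $|B'|=\c>\aleph_1\ge|B|$.

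Finally, given any family $\cf$ of continuous functions with $B\subseteq dom(f)$ for every $f\in\cf$ and $|\cf|<\c=\aleph_2$, we have $|\cf|\le\aleph_1$, hence $|\bigcup_{f\in\cf}f[B]|\le|\cf|\cdot|B|\le\aleph_1<\aleph_2=|A|$; so this union cannot contain $A$, which gives $A\setminus\bigcup\{f[B]:f\in\cf\}\ne\emptyset$. I do not expect a genuine obstacle in this argument: the content is entirely in choosing a model that simultaneously carries a strongly null set of size $\c$ and a completely $\cm$-nonmeasurable set of size $<\c$, and the two small verifications that need care — that the Cohen-generic Luzin set is indeed strongly null, and that the spread-out set $B$ meets every non-meager Borel set — are handled above. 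If one prefers to avoid the metamathematics, the construction of $B$ and the final cardinality count go through verbatim from the $ZFC$ hypothesis that there is a Luzin set of cardinality $\c$ together with $non(\cm)<\c$, whose consistency is exactly what the Cohen extension provides.
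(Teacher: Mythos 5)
Your argument is correct as a proof of the literal statement, but it takes a genuinely different route from the paper, and the difference is worth spelling out. You make the final step trivial by choosing $B$ small: since in the Cohen model $non(\cm)=\aleph_1<\c$, you spread a non-meager set of size $\aleph_1$ through a countable base to get a completely $\cm$-nonmeasurable $B$ with $|B|\le\aleph_1$, and then $\bigl|\bigcup\{f[B]:f\in\cf\}\bigr|\le|\cf|\cdot|B|<\c=|A|$ settles everything by counting (your two verifications — Luzin $\Rightarrow$ concentrated on $\bbq$ $\Rightarrow$ strongly null, and the Baire-property argument that $B$ meets every non-meager Borel set — are fine; only cosmetically, $B'\cap U_n\supseteq U_n\setminus M$ rather than equality, and your closing ``ZFC hypothesis'' reformulation also needs $\c$ regular so that $\lambda\cdot non(\cm)<\c$ for all $\lambda<\c$). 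The paper instead takes $B=\{c_\xi:\xi\in B_0\}+Q$, a \emph{dense Luzin set of full cardinality} $\c$, for which the cardinality count is unavailable (already $f=\mathrm{id}$ gives $|f[B]|=\c$); its real content is a factorization/nice-name argument: any family $\cf$ with $|\cf|<\w_2$ of continuous functions is coded in an intermediate extension $V[G_{B_\cf}]$ using only the coordinates in some $B_\cf\supseteq B_0$ of size $<\w_2$ together with countably many more, so every point of $\bigcup\{f[B]:f\in\cf\}$ lies in that intermediate model, while $A$ contains Cohen reals over it. So both proofs establish the theorem as stated; yours is more elementary and isolates the combinatorial core (a strongly null set of size $\c$ plus a completely nonmeasurable set of size $<\c$), while the paper's buys the stronger phenomenon that the covered set $B$ can itself be of size $\c$ (indeed a dense Luzin set), which is closer to the spirit of the Osipov/Bernstein-set question that motivates the section.
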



\begin{proof} We work on $\cantor$. Let $V$ be ground model satisfying CH. 
Consider the forcing notion $(\cc,\le)$ which adds $\w_2$ many independent Cohen reals. Let us split $\w_2$ onto two subsets $A_0$ and $B_0$ with the same cardinality equal to $\w_2$. Next define 
$$
A = \{c_\xi:\xi\in A_0\};\; \text{ and }\;\; B = \{ c_\xi:\; \xi\in B_0 \} + Q,
$$
where $Q$ is collection of all rationals in $\cantor$ (all sequences of all but finitely many zeros).
Then we have the following Claim.
\begin{claim} Let $G\subseteq \cc$ be generic filter over $V$.Then in $V[G]$ we have
\begin{enumerate}
    \item $A,B$ are Łusin sets and $B$ is dense,
    \item $B$ is completely $\cm$-nonmeasurable in $\cantor,$
    \item $A$ is strongly null set.
\end{enumerate}
\end{claim}
\begin{proof}[Proof of the Claim] 
From standard argument with nice names and factorisation lemma follows that for every meager $F_\sigma$ set $F$ the intersection $F\cap \{c_\xi:\; \xi<\w_2\}$ is countable. Then $A$ and $B$ are Lusin sets and then the first condition is proved. The second condition follows directly from the first (because $B$ is Lusin and dense in $\cantor$). To see that the last condition is true fix $(\epsilon_n)_{n\in\w}$. Let $(\epsilon_n)_{n\in\w}$ be any sequence of positive reals. Define $\{ s_n:\; n\in \w\}\subseteq 2^{<\w}$ and
$$
U = \bigcup\{ [s_n]:\; n\in\w\}.
$$
such that for ever $n\in\w$ we have
\begin{enumerate}
    \item $\frac{1}{\epsilon_{2n}} < |s_n|,$
    \item for every $s\in \w^{<\w}$ there is $m\in\w$ such that $s\subseteq s_m.$
\end{enumerate}
Then $U$ is dense open set. Then as $A$ is Lusin set we have that $A\setminus U$ is countable. It follows that except for countably many points $A$ is coveres by a meager set. This countably many points can be covered by the sequence of clopens $([t_n])_{n\in\w}$ with property $\lambda(t_n) < \epsilon_{2n+1}$ for every $n\in\w.$ Then we have
$$
A = (A\cap U)\cup (A\setminus U)\subseteq \bigcup_{n\in\w} [s_n] \cup \bigcup_{n\in\w} [t_n].
$$
Thus the proof of the Claim is finished.
\end{proof}
Now in $V[G]$, let $\cf$ be a family of size less than $\w_2$ of continuous functions $f\in\cf$ defined on dense $G_\delta$ sets $G_f = dom(f)$ such that $B\subseteq G_f.$ Each $f$ has the Borel code $c_f$ which is represented by some nice name $\dot{c_f}.$ Let observe that each name $c_f$ uses only countable many conditions $p\in\cc$. Denote them by $X_f$. Then for each $f\in\cf$ $c_f \in V^{\cc_{X_f}}$ where $X_f\in [\w_2]^{\le \w}.$ But $\w_2$ is regular cardinal and $|\cf|<\w_2$ then $\w_2 \setminus \bigcup_{f\in\cf} X_f$ has size equal to $\w_2.$ Then there exists $B_\cf\subseteq \w_2$ such that $B_0\subseteq B_\cf$ and $|A\setminus B_\cf| = \w_2$ and $\cf \in V[G_{B_\cf}]$ where 
$$
G_{B_\cf} = \{p\in\cc_{B_\cf}:\; (\exists q\in \cc_{\w_2\setminus B_\cf}\; (p,q)\in G).
$$ 
But $V[G_{B_\cf}]$ is model of $ZFC$ theory then (by replacing axiom) for every $f\in \cf$ $f[dom(f)]\in V[G_{B_\cf}].$ Let observe that for every $\xi \in \w_2\setminus B_\cf$ $c_\xi\notin V[G_{B_\cf}]$ (here $V[G] = V[G_{B_\cf}][G_{B_{\w_2\setminus B_\cf}}]$). Finally, for each $f\in\cf$ $A\setminus f[dom(f)] \ne \emptyset$ and then $A\setminus f[B] \ne \emptyset$ also. 
\end{proof}

Using fact that every Sierpiński set is strongly meager set (\cite{pawlikowski-archive}), we can prove in the analogous way theorem to the Theorem \ref{main-osipov}.

\begin{theorem}\label{main-osipov-stongly-meager} It is consistent with $ZFC$ theory that there exists $A\subseteq \bbr$ and $B\subseteq\bbr$ such that
\begin{itemize}
    \item $A$ is strongly meager,
    \item $B$ is completely $\cn$-nonmeasurable.
\end{itemize}
and for every family of continuous functions $\cf$, $|\cf|<\c$ satisfying $\forall f\in\cf$ $B\subseteq dom(f)$ we have that $A\setminus \bigcup\{ f[B]:\; f\in\cf\}\ne\emptyset.$ 
\end{theorem}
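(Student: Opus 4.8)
The plan is to run the proof of Theorem~\ref{main-osipov} in its measure-theoretic dual: replace the Cohen model by the random model, Lusin sets by Sierpi\'nski sets, category by measure, and use the cited theorem (\cite{pawlikowski-archive}) that every Sierpi\'nski set is strongly meager in place of the fact that every Lusin set is strongly null. Working in $\cantor$, as in Theorem~\ref{main-osipov}, I would start from a ground model $V\models CH$, force with the measure algebra $\cc$ adding $\w_2$ independent random reals $\{r_\xi:\xi<\w_2\}$, split $\w_2=A_0\cup B_0$ into two pieces of size $\w_2$, and in $V[G]$ put $A=\{r_\xi:\xi\in A_0\}$ and $B=\{r_\xi:\xi\in B_0\}+Q$, where $Q$ is the set of eventually-zero elements of $\cantor$. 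The structural facts about $\cc$ I would use are: for $Z\subseteq\w_2$ there is a complete subalgebra $\cc_Z$ corresponding to the coordinates $\{r_\xi:\xi\in Z\}$, one has $V[G]=V[G\restricted Z][G\restricted(\w_2\setminus Z)]$ with $r_\eta$ random over $V[G\restricted Z]$ (hence $r_\eta\notin V[G\restricted Z]$) for $\eta\notin Z$, and every Borel subset of the underlying space depends on countably many coordinates, so each nice $\cc$-name for a real is a $\cc_X$-name for some countable $X\subseteq\w_2$.

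First I would verify in $V[G]$ the analogue of the Claim from the proof of Theorem~\ref{main-osipov}. A standard nice-name argument shows every null Borel set is coded at a countable stage and so meets $\{r_\xi:\xi<\w_2\}$ in a countable set; hence $A$ and $\{r_\xi:\xi\in B_0\}$ are Sierpi\'nski sets, and therefore so is $B$, since it is a countable union of translates of a Sierpi\'nski set and every null subset of a countable union of Sierpi\'nski sets is countable. By \cite{pawlikowski-archive}, $A$ is strongly meager. For complete $\cn$-nonmeasurability of $B$, take any Borel $C$ with $\lambda(C)>0$: a similar argument shows $\{r_\xi:\xi\in B_0\}$ has outer measure $1$ (if it were contained in a Borel set of measure $<1$, coded at a countable stage, a countably infinite subfamily of the $r_\xi$ would be random over that stage yet lie in the corresponding null Borel product set), so it, and hence $B$, meets $C$; and $C$ contains a perfect set $P$, which cannot be a subset of the Sierpi\'nski set $B$ (a measure-zero perfect subset of $P$ would be an uncountable null subset of $B$), so $B^c\cap C\ne\emptyset$ as well.

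The heart of the proof is the closing-off step, which I would copy verbatim in structure from Theorem~\ref{main-osipov}. Let $\cf\in V[G]$ be a family of continuous functions with $|\cf|<\c=\w_2$ and $B\subseteq dom(f)$ for each $f\in\cf$; by Lavrentiev's extension theorem we may assume each $dom(f)$ is $G_\delta$, so $f$ has a Borel code $c_f\in\cantor$ while $f[B]$ and the hypotheses are unchanged. Below a condition deciding $|\cf|=\kappa\le\w_1$ we obtain names $\langle\dot c_\alpha:\alpha<\kappa\rangle$ for an enumeration of the codes of the members of $\cf$; replacing each $\dot c_\alpha$ by an equivalent $\cc_{X_\alpha}$-name with $X_\alpha$ countable and setting $Z=\bigcup_{\alpha<\kappa}X_\alpha$, we get $|Z|\le\w_1<\w_2$ and $\cf\in V[G\restricted Z]$. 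Put $N=V[G\restricted(Z\cup B_0)]$, so that $\cf\in N$ and $B\in N$. Since $A_0\cap B_0=\emptyset$ and $|Z|<\w_2=|A_0|$, we may pick $\eta\in A_0\setminus(Z\cup B_0)$; then $r_\eta\in A$ and $r_\eta$ is random over $N$, hence $r_\eta\notin N$. Finally, every element of $f[B]$ has the form $f(r_\xi+q)$ with $\xi\in B_0$ and $q\in Q$, and since $r_\xi$, $q$, and the code of $f$ all lie in $N$, this value is computed inside $N$ (evaluation of a continuous function from its $G_\delta$ code is absolute); hence $f[B]\subseteq N\cap\cantor$ for every $f\in\cf$, so $r_\eta\in A\setminus\bigcup\{f[B]:f\in\cf\}$, as required.

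I expect the one real obstacle to be making this closing-off step fully precise: one must justify that a $\cc$-name for a family of fewer than $\c$ continuous functions can be reflected into the measure algebra over a set $Z$ of size at most $\w_1$ — uniformly enough to then locate $\eta\in A_0$ outside $Z\cup B_0$ — and check carefully that the ``all values of $f[B]$ are reals of $N$'' observation is genuinely absolute. The remaining ingredients (Sierpi\'nski-ness of $A$ and $B$, strong meagerness of $A$ via the cited theorem, and complete $\cn$-nonmeasurability of $B$) are routine dualisations of the computations already made for Theorem~\ref{main-osipov}.
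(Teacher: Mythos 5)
Your proposal is correct and is exactly the paper's intended argument: the paper proves this theorem by dualising the proof of Theorem~\ref{main-osipov} to the random real model, replacing Lusin by Sierpi\'nski sets and invoking Pawlikowski's theorem (\cite{pawlikowski-archive}) that every Sierpi\'nski set is strongly meager, with the same factorisation/nice-name closing-off step you describe. Your additional verifications (outer measure one of the random reals, Sierpi\'nski-ness of $B$ and hence complete $\cn$-nonmeasurability) fill in the details the paper leaves implicit.
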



Now we move into subject of Eggleston Theorem, Fubini and Kuratowski-Ulam Theorem. Here we recall Eggleston Theorem only, see \cite{Brodskii} and \cite{Eggleston}.

\begin{theorem}[Eggleston] Let $m,n\in$ be positive integers and $\ci$ be one of them $\cm$ or $\cn$ $\sigma$-ideals defined on $\bbr^k$ euclidean space where $k\in\{ m,n,m+n \}$. Let us assume that $B\subseteq \bbr^{m+n}$ relation such that $B^c \in \ci$. Then there are perfect subsets $P\subseteq \bbr^m$ and $Q\subseteq \bbr^n$ such that $Q\notin \ci$ and $P\times Q \subseteq B.$
\end{theorem}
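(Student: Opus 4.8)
I would prove the measure case $\ci=\cn$ in detail, the category case $\ci=\cm$ being entirely parallel with the Kuratowski--Ulam theorem taking the place of Fubini's theorem. The first move is a normalisation: after replacing $\bbr^{k}$ by a bounded cube and rescaling one may assume $B\subseteq[0,1]^{m+n}$, and since $B^{c}$ is $\cn$-small one may assume $\lambda_{m+n}(B^{c})=0$. Fix a null $G_{\delta}$ set $N=\bigcap_{k}W_{k}\supseteq B^{c}$ with $W_{k}\subseteq[0,1]^{m+n}$ open, decreasing, and $\lambda_{m+n}(W_{k})\to 0$ as rapidly as convenient; equivalently $B\supseteq\bigcup_{k}F_{k}$ with $F_{k}=[0,1]^{m+n}\setminus W_{k}$ closed and increasing.

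Next I would invoke Fubini's theorem: for each $k$ the section function $x\mapsto\lambda_{n}((W_{k})_{x})$ is Borel, its integral over $[0,1]^{m}$ is $\lambda_{m+n}(W_{k})$, and hence for almost every $x\in[0,1]^{m}$ one has $\lambda_{n}((W_{k})_{x})\to 0$, so in particular $B^{c}_{x}$ is $\cn$-null. With this in hand I would construct $P$ and $Q$ by a single recursion of length $\w$. On the $\bbr^{m}$ side one grows a Cantor scheme $(I_{s})_{s\in 2^{<\w}}$ of closed sets with $I_{s^{\frown}i}\subseteq I_{s}$, disjoint sibling closures and $\operatorname{diam}(I_{s})\to 0$, so that $P:=\bigcap_{j}\bigcup_{s\in 2^{j}}I_{s}$ is perfect; on the $\bbr^{n}$ side one grows a decreasing sequence of closed sets $F_{0}'\supseteq F_{1}'\supseteq\cdots$ whose measures stay above a fixed positive constant, and each time a new point $z$ is fed into the (countable, dense in $P$) skeleton of $P$ one replaces the current $F'$ by its intersection with a suitable closed subset of $B_{z}$, the point $z$ being chosen so that only a controlled amount of measure is lost. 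Putting $Q_{0}:=\bigcap_{k}F_{k}'$ gives a closed set with $\lambda_{n}(Q_{0})>0$; replacing $Q_{0}$ by its Cantor--Bendixson perfect kernel yields a perfect $Q$ with $\lambda_{n}(Q)>0$, i.e.\ $Q\notin\cn$, and by construction $\{z\}\times Q\subseteq B$ for every skeleton point $z$.

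The closing step is the passage to the limit: given $x\in P$ choose skeleton points $z_{(j)}\to x$; for every $y\in Q$ the points $(z_{(j)},y)$ lie, for all large $j$, in one fixed closed piece $F_{\ell}$ --- in the measure case because $Q$ was kept inside the relevant sections and a decreasing intersection of closed sets is closed, in the category case because at stage $\ell$ the current rectangle was forced to miss the $\ell$-th closed nowhere dense set exhausting $B^{c}$ --- and since $F_{\ell}$ is closed one concludes $(x,y)\in F_{\ell}\subseteq B$; thus $P\times Q\subseteq B$. The step I expect to be the genuine obstacle is the bookkeeping inside the recursion. Although each section $B^{c}_{x}$ is $\cn$-null for almost all $x$, the family $\{B^{c}_{x}:x\in P\}$ over an uncountable perfect set $P$ need not admit a common $\cn$-positive complement, so an arbitrary perfect subset of the ``good'' set of $x$'s will not do: $P$ and $Q$ must really be built in tandem, each new point of $P$ being constrained by the current approximation to $Q$ and vice versa. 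Reconciling the demand that the admissible new points be topologically dense (to keep $P$ perfect) with the demand that each new point cost only a summable amount of the measure of $Q$ is exactly the heart of Eggleston's theorem, and I would carry this part out along the lines of \cite{Eggleston} and \cite{Brodskii}, where one first passes, via a Lusin/Egorov type regularisation, to a subset of $[0,1]^{m}$ on which the assignment $x\mapsto B^{c}_{x}$ is sufficiently well behaved before running the fusion.
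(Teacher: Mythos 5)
First, note that the paper does not prove this theorem at all: it is recalled as a classical result with references to \cite{Brodskii} and \cite{Eggleston}, so there is no in-paper argument to compare yours with and your proposal must stand on its own. Your measure half is the standard Eggleston--Brodskii fusion and is essentially the right plan: Fubini to control sections, a Cantor scheme on the $\bbr^m$ side built in tandem with a decreasing sequence of closed positive-measure sets on the $\bbr^n$ side, and the observation that for a closed set $F$ and a countable dense skeleton $\{z_j\}$ of $P$ one has $\bigcap_{x\in P}F_x=\bigcap_j F_{z_j}$. Two caveats, though. As written, your closing step asserts that for $y\in Q$ the pairs $(z_{(j)},y)$ eventually lie in ``one fixed closed piece $F_\ell$''; this does not follow if different skeleton points are constrained only relative to different members $F_{\ell_j}$ of your increasing exhaustion, since an increasing union of closed sets need not be closed. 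The standard repair is to fix, by inner regularity, a single closed $F\subseteq B$ with $\lambda_{m+n}([0,1]^{m+n}\setminus F)<\epsilon$ at the outset and run the entire fusion inside $F$, localising the Fubini estimate to the current cubes $I_s$ (via density points) so that admissible new points exist in every $I_s$ with summable measure loss; losing a fixed $\epsilon$ is harmless because only $\lambda_n(Q)>0$ is needed. You flag this bookkeeping as the heart and defer to \cite{Eggleston} and \cite{Brodskii}, which is fair for a plan.

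The genuine gap is the claim that the category case is ``entirely parallel''. It is not. Your construction produces a closed (indeed perfect) $Q$, and a closed set is non-meager only if it has nonempty interior; but a comeager section $B_z$ (think of the irrationals) need not contain any closed non-meager subset, so the step ``replace the current closed set by its intersection with a suitable closed subset of $B_z$, losing little'' has no category counterpart, and there is no loss-per-step bookkeeping for category at all. Worse, the statement read literally (both $P$ and $Q$ perfect and $Q\notin\cm$) is false: take $B=\bbr^{m+n}\setminus(\bbr^m\times\bbq^n)$, whose complement is meager; any perfect non-meager $Q\subseteq\bbr^n$ has interior, hence meets $\bbq^n$, so $P\times Q\not\subseteq B$ for any nonempty $P$. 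The correct category analogue weakens $Q$ to a non-meager set that is not closed, e.g.\ a dense $G_\delta$; for the square this is precisely Żeberski's theorem \cite{Szymon} quoted immediately after this statement in the paper (applied to the flipped set so that the perfect factor sits on the $\bbr^m$ side), and its proof is a genuinely Kuratowski--Ulam based construction, not a transcription of the measure fusion. So you should either prove that corrected $\cm$ statement by a separate argument or record that the $\cm$ half of the displayed theorem requires this reformulation.
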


In \cite{Szymon} Żeberski proved a some generalisation of Egglestone Theorem for the real square $[0,1]^2.$

\begin{theorem}[Żeberski] Let $A\subseteq [0,1]^2$ be a comeager set. Then we can find two sets $G,Q\subseteq [0,1]$ such that $G$ is dense $G_\delta$ set, $Q$ is perfect set and $G\times Q\subseteq A.$
\end{theorem}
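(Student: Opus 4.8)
The plan is to reduce the statement to the construction of a single perfect set and then to read off the dense $G_\delta$ set from the Kuratowski--Ulam theorem. Since $A$ is comeager, its complement is contained in a meager $F_\sigma$ set $M=\bigcup_{l\in\w}N_l$, where we may assume each $N_l$ is closed nowhere dense and $N_l\subseteq N_{l+1}$; it clearly suffices to produce a perfect set $Q\subseteq[0,1]$ and a dense $G_\delta$ set $G\subseteq[0,1]$ with $(G\times Q)\cap M=\emptyset$. I would build $Q$ as the branch set $Q=\bigcap_n\bigcup_{|s|=n}I_s$ of a Cantor scheme $\{I_s:s\in 2^{<\w}\}$ of closed non-degenerate intervals, with $I_{s0},I_{s1}\subseteq\mathrm{int}(I_s)$ disjoint and $\mathrm{diam}(I_s)\le 2^{-|s|}$, and then take $G$ to be any dense $G_\delta$ subset of the ``good column set'' $G^{*}=\{x\in[0,1]:(\{x\}\times Q)\cap M=\emptyset\}$, after arranging that $G^{*}$ is comeager.

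For the recursion I would fix a surjection $\rho\colon\w\to\w\times\cb$, where $\cb$ is a countable base of nonempty open intervals of $[0,1]$, and at stage $n$ attend to the requirement coded by $\rho(n)=(l,(a,b))$ as follows. Writing $N_l(x)=\{y\in[0,1]:(x,y)\in N_l\}$, the Kuratowski--Ulam theorem applied to the meager set $N_l$ gives that $\{x:N_l(x)\text{ is meager}\}$ is comeager; since a closed meager subset of $[0,1]$ is nowhere dense, $\{x:N_l(x)\text{ is nowhere dense}\}$ is comeager, hence dense, hence meets $(a,b)$. Choose such an $x_n\in(a,b)$. Then for every $s$ with $|s|=n$ the open set $\mathrm{int}(I_s)\setminus N_l(x_n)$ is nonempty (it is not contained in the nowhere dense set $N_l(x_n)$), so it contains two disjoint closed non-degenerate intervals of diameter at most $2^{-(n+1)}$; let these be $I_{s0},I_{s1}$. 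Putting $Q_{n+1}=\bigcup_{|t|=n+1}I_t$ we obtain $N_l(x_n)\cap Q_{n+1}=\emptyset$. The construction is conflict-free: refining the scheme at later stages only shrinks the sets $Q_m$, and $Q\subseteq Q_{n+1}$, so the achieved equality upgrades permanently to $N_l(x_n)\cap Q=\emptyset$, i.e. $(\{x_n\}\times Q)\cap N_l=\emptyset$.

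It then remains to verify the two required properties of the outcome $Q=\bigcap_nQ_n$ and of $G^{*}$. First, $Q$ is perfect since $\{I_s\}$ is a genuine Cantor scheme with vanishing diameters. Second, fix $l\in\w$ and consider $P_l:=\{x:N_l(x)\cap Q\ne\emptyset\}$; since $N_l\cap([0,1]\times Q)$ is compact, $P_l$ is its first-coordinate projection and hence closed. If some open interval were contained in $P_l$, it would contain a basic interval $(a',b')\in\cb$, and at the stage $n$ with $\rho(n)=(l,(a',b'))$ we would have found $x_n\in(a',b')$ with $N_l(x_n)\cap Q=\emptyset$, a contradiction. Thus each $P_l$ is closed with empty interior, i.e. nowhere dense, so $[0,1]\setminus G^{*}=\bigcup_lP_l$ is meager; therefore $G^{*}$ is comeager and any dense $G_\delta$ set $G\subseteq G^{*}$ satisfies $G\times Q\subseteq G^{*}\times Q\subseteq[0,1]^2\setminus M\subseteq A$.

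The one point requiring care is the very set-up. The ``symmetric'' fusion strategy --- fixing a decreasing sequence $U_n$ of dense open subsets of $[0,1]^2$ with $\bigcap_nU_n\subseteq A$ and demanding that the $n$-th approximating set $V_n$ for $G$ and the $n$-th level $\bigcup_{|s|=n}I_s$ of the scheme for $Q$ satisfy $V_n\times\bigcup_{|s|=n}I_s\subseteq U_n$ --- cannot work, already for $A=[0,1]^2\setminus\Gamma_f$ with $f$ the Cantor function: a finite union of non-degenerate intervals has nonempty interior, while the continuous surjection $f$ sends every dense set onto a dense subset of $[0,1]$, so no dense $V_n$ is available. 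Fixing $Q$ first and using Kuratowski--Ulam to supply, at each finite stage, a column $x_n$ along which the current obstruction $N_l$ has a nowhere dense section is exactly what bypasses this; everything after that is bookkeeping.
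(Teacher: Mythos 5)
Your proof is correct, and it takes a genuinely different route from the one behind the paper's statement. The paper quotes this result from Żeberski \cite{Szymon}, whose proof (reproduced in spirit in the proof of Theorem \ref{gener_meager} here) is metamathematical: one passes to a generic extension in which $\cof(\cm)=\w_1<\c$, uses the Cichoń--Kamburelis--Pawlikowski theorem to get a small cofinal family of dense $G_\delta$ sets, applies Kuratowski--Ulam to find one dense $G_\delta$ set $W$ whose ``good'' columns form a set of size $\c$, extracts a perfect set from that co-analytic set, and then pulls the $\Sigma^1_2$ statement back to the ground model by Shoenfield absoluteness. You instead give a direct ZFC construction: writing the complement of $A$ inside an increasing union of closed nowhere dense sets $N_l$, you build the perfect set $Q$ first by a Cantor scheme, at each stage using Kuratowski--Ulam only in its elementary form (comeager many columns have nowhere dense $N_l$-sections) to pick a witness column $x_n$ in a prescribed basic interval and to shrink the current level of the scheme off $N_l(x_n)$; the permanence of these commitments makes each projection $P_l=\{x: N_l(x)\cap Q\ne\emptyset\}$ closed (projection of a compact set) with empty interior, so the set of good columns is comeager and contains the required dense $G_\delta$ set $G$. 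Your verification steps (perfectness of $Q$, closedness and nowhere density of $P_l$, $G\times Q\subseteq A$) are all sound, and the requirement bookkeeping via the surjection onto $\w\times\cb$ does ensure every pair $(l,\text{basic interval})$ is met. What each approach buys: yours is elementary and self-contained, needing no forcing or absoluteness; the paper's method scales more readily to the generalizations treated here (e.g.\ Theorem \ref{gener_meager}, where the second factor is replaced by a $G_\delta$ set of homeomorphisms), where the explicit fusion combinatorics would be more delicate. The closing remark about the failure of the symmetric fusion strategy is a reasonable motivation and does not affect correctness.
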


Here we consider little bit more general case. Namely, let $X$ be a compact Polish space and $\ch{(X)},$ be a family of all homeomorphisms of $X$ endowed with compact-open topology. Then $\ch{(X)}$ forms Polish space and then any $G_\delta$ uncountable subset of $\ch{(X)}$ is Polish space also. It is well known that the all cardinal coefficients for $\sigma$-ideal of the meager subsets does not depend of choice of Polish space.
\begin{theorem}\label{gener_meager} Let $X$ be a compact Polish space, $G\subseteq \ch(X)$ be uncountable $G_\delta$ set and $B\subseteq X$ be a comeager subset of $X$. Then 
\begin{enumerate}
    \item there are dense $G_\delta$ set $W\subseteq X$ and perfect set $Q\subseteq G$ such that  for every homeomorphism $f\in Q$ of $X$ we have $W\subseteq f[B]$,
    \item there are dense $G_\delta$ set $W\subseteq G$ and perfect set $P\subseteq X$ such that  for every homeomorphism $f\in W$ we have $P\subseteq f[B]$.
\end{enumerate}
\end{theorem}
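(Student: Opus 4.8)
The plan is to recast both clauses as the existence of two differently‑shaped rectangles inside a single comeager subset of the product $G\times X$, and then to produce those rectangles by an Eggleston‑style fusion. Since $X$ is compact metrizable, $\ch(X)$ with the compact–open topology is a Polish group in which inversion $f\mapsto f^{-1}$ and evaluation $(f,x)\mapsto f(x)$ are continuous; hence the map
$$
\Phi\colon\ch(X)\times X\to X,\qquad \Phi(f,x)=f^{-1}(x)
$$
is continuous, and $G$, being $G_\delta$ in $\ch(X)$, is a Polish space. Fix a dense $G_\delta$ set $B_0=\bigcap_{n}U_n\subseteq B$ with each $U_n$ open dense in $X$, and put $R=\Phi^{-1}[B_0]\cap(G\times X)=\bigcap_n\Phi^{-1}[U_n]$. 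Then $R$ is a $G_\delta$ subset of $G\times X$, and since $f^{-1}(x)\in B_0$ is equivalent to $x\in f[B_0]\subseteq f[B]$, it suffices to find inside $R$ a set $Q\times W$ with $Q\subseteq G$ perfect and $W\subseteq X$ dense $G_\delta$ (this yields clause $(1)$) and a set $W\times P$ with $W\subseteq G$ dense $G_\delta$ and $P\subseteq X$ perfect (this yields clause $(2)$).

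Next I would check that $R$ is comeager in $G\times X$. Each $\Phi^{-1}[U_n]$ is open; it is also dense, because given a nonempty open box $\mathcal O\times V$ one picks any $f\in\mathcal O$, notes that $f[U_n]$ is open and dense in $X$ (being the image of the dense open set $U_n$ under the homeomorphism $f$), chooses $x\in f[U_n]\cap V$, and obtains $(f,x)\in(\mathcal O\times V)\cap\Phi^{-1}[U_n]$. Equivalently, every vertical section $R_f=f[B_0]$ is comeager in $X$ because $f$ is a homeomorphism, so Kuratowski–Ulam applied to the $G_\delta$ set $R$ gives at once that $R$ is comeager; this (together with the continuity of $\Phi$) is where the assumption that the elements of $G$ are homeomorphisms is used.

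It remains to carve the rectangles out of $R$. This is the rectangle lemma underlying the theorems of Eggleston and Żeberski quoted above: a comeager subset of a product of two perfect Polish spaces contains a rectangle (dense $G_\delta$)$\times$(perfect), and, by symmetry of the product, also one (perfect)$\times$(dense $G_\delta$). Applying it to $R\subseteq G\times X$ gives clause $(2)$ and applying it to $R$ with the coordinates interchanged gives clause $(1)$, since $Q\times W\subseteq R$ unwinds to ``$W\subseteq f[B]$ for every $f\in Q$'' and $W\times P\subseteq R$ to ``$P\subseteq f[B]$ for every $f\in W$''. For a self‑contained proof one reproves the lemma by the usual fusion: writing $R=\bigcap_n W_n$ with $W_n$ open dense, one builds a Cantor scheme $\{C_s:s\in 2^{<\w}\}$ of nonempty open sets in the coordinate destined to carry the perfect set (shrinking diameters, nested and eventually disjoint compact closures), serving one pair $(n,k)$ at a time the requirement that $\{\,y:\bigcup\{\overline{C_s}:s\text{ a current leaf}\}\subseteq(W_n)_y\,\}$ meet the basic open set $O_k$ of the other coordinate; at that stage one chooses $y^{\ast}\in O_k$ inside the Kuratowski–Ulam–comeager set $\{y:(W_n)_y\text{ is dense}\}$ and shrinks every current leaf of the scheme to lie inside the single dense open section $(W_n)_{y^{\ast}}$. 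The perfect set $P$ is then the body of the scheme, and the dense $G_\delta$ is $\bigcap_n\{y:P\subseteq(W_n)_y\}$, which is open because $P$ is compact and dense because every requirement was served. (If $G$ or $X$ fails to be perfect one simply works inside a perfect subspace, which changes nothing.)

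The conceptual content is in the first two paragraphs — reading $f[B]$ off the continuous map $\Phi$, after which comeagerness of $R$ is immediate. The main technical obstacle, if one does not quote the rectangle lemma, is the fusion above; the delicate point is precisely that the vertical section into which the current leaves are squeezed must be dense in the ``perfect'' coordinate, which is why $y^{\ast}$ is chosen inside the comeager set provided by Kuratowski–Ulam.
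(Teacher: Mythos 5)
Your proof is correct, and after the common first step it takes a genuinely different route from the paper. Both arguments begin by transferring the images $f[B]$ into one comeager subset of $G\times X$: the paper uses the homeomorphism $F(f,x)=(f,f(x))$ and the comeager set $F[G\times B]$, while you use the continuous map $\Phi(f,x)=f^{-1}(x)$ and the comeager $G_\delta$ set $\Phi^{-1}[B_0]\cap(G\times X)$; these are the same set up to replacing $B$ by $B_0$, and your density check for the open sets $\Phi^{-1}[U_n]$ is fine (compactness of $X$ is what makes $\Phi$ continuous). The divergence is in how the rectangle is extracted from this comeager set. The paper follows Żeberski's metamathematical scheme: pass to a Sacks-iteration extension where $\cof(\cm)=\w_1<\w_2=\c$, use the Cichoń--Kamburelis--Pawlikowski theorem to get a cofinal family of $\w_1$ dense $G_\delta$ sets, find by counting one $W$ for which $\{f\in G: W\subseteq f[B]\}$ has size $\w_2$, extract a perfect set from this coanalytic set, and pull the resulting $\Sigma^1_2$ statement back to the ground model by Shoenfield absoluteness. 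You instead prove the rectangle lemma directly in ZFC by a Kuratowski--Ulam/Mycielski-style fusion: a Cantor scheme in the coordinate destined to carry the perfect set, squeezed at the stage serving $(n,k)$ into a single dense open section $(W_n)_{y^*}$ with $y^*$ in the $k$-th basic open set of the other coordinate; compactness of the body $P$ then makes $\{y:P\subseteq (W_n)_y\}$ open (tube lemma) and the bookkeeping makes it dense. Your route buys a self-contained, forcing- and absoluteness-free proof whose hypotheses are visible at each step; the paper's route outsources the combinatorics to known machinery and is shorter once that machinery is granted. One caveat, which affects both proofs equally: the coordinate in which the perfect set is sought must admit a perfect set on which the relevant dense open sections stay dense. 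This is automatic on the $G$ side ($G$ is uncountable Polish) and on the $X$ side whenever $X$ is perfect, which is clearly the intended situation; but if $X$ has a dense set $M$ of isolated points then every $f\in\ch(X)$ satisfies $f[M]=M$, taking $B=M$ makes clause $(2)$ false, so your parenthetical remark about ``working inside a perfect subspace'' should be read as an implicit perfectness hypothesis on $X$ rather than as a repair.
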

\begin{proof} We prove only the first statement as the proof of the second one is very similar. Let us consider the Polish space $Z=G\times X \subseteq \ch(X)\times X$ with a product topology. Define the map $F: Z \mapsto Z$ as follows
	$$
	Z = G\times X \ni (f,x) \mapsto F(f,x) = (f,f(x)) \in Z.
	$$
	It is easy to see that $F$ is homeomorphism.
	Now, let us observe that by Kuratowski-Ulam Theorem the set $G\times B$ is comeager subset of $G\times X$ and then the set $F[G\times B]$ is also comeager in $G\times X$.
	The rest of this proof is follows from the proof of Eggleston Theorem on the real plane $\bbr^2$ given by Żeberski, see \cite{Szymon} (for the other proof, see \cite{MRZ}).
	Let us assume that we are working in transitive ground model of ZFC theory, say $V$. Now let us consider the generic extension $V'$ of $V$ in which $\cof(\cm) = \w_1 < \w_2=\c$ (for example let us iterate with countable support $\w_2$ times of Sacks forcing). Let us pass to $V'$ universe. By Cichoń-Kamburelis-Pawlikowski Theorem see \cite{CKP}, there is a family $\cf$ of the size $\w_1$ of dense $G_\delta$ subsets of $X,$ which is cofinal in $\{B\in Bor(X):X\setminus B\in \cm\}$. 
	By Kuratowski-Ulam Theorem the set $$
	\{ f\in G:\; ((F[G\times B])^c)_f\in\cm \} 
	= \{ f\in G:\; (f[B])^c\in\cm\}
	$$
	is comeager in $G$ and then has size $\c = \w_2.$ But our cofinal family $\cf$ has size $\w_1.$ Then there is a $G_\delta$ set $W\in \cf$ such that 
	$$
	A = \{ f\in G:\; W\subseteq f[B]\}
	$$
	has size $\w_2.$ Note that $A$ as a co-analityc set is a union of $\omega_{1}$ borel sets, thus there must be a perfect set $Q\subseteq A.$ Then we have $W\times Q \subseteq F[G\times B]$ what can be expressed as follows
	\begin{align*}
	&(\forall f\in G)(\forall y\in X) (f\in W\land y\in Q)\then (\exists x) (f,y) = (f,f(x))\land x\in B\\
	\iff&
	(\forall f\in G)(\forall x,y\in X) f\in W\land y\in Q\land (x,y)\in f \then x\in B
	\end{align*}
	So we have proved that
	$$
	(\exists W\in G_\delta)(\exists Q\in Perf(X)) W\times Q \subseteq F[G\times B]
	$$
	Let us observe that any dense $G_\delta$ set of $X$ can by coded by some real $c\in \w^\w$. This coding can be written by arithmetic formula. Finally,
	$$
	(\exists c\in\w^\w)(\exists Q\in Perf(X))(c\text{ coding dense }G_\delta)\land (\forall f\in G)(\forall x,y\in X)\; \varphi(c,x,y,f,B)
	$$
	is $\Sigma_2^1$-formula with ground model parameter $B$, where
	$$
	\varphi(c,x,y,f,B) \equiv f\in \#c\land y\in Q\land (x,y)\in f\then x\in B.
	$$
    The above formula is absolute between transitive models of $ZFC$ theory containing $\w_1$ by the Schoenfield's Theorem so it is satisfied in ground model $V$. 
\end{proof}

Here we can consider locally compact abelian Polish group $H$. This group $H$ can be equipped with Haar measure $\mu.$ Then the group $\ch_\mu(H)$ of all homeomorphism preserving measure $\mu$ forms a closed subspace of $\ch(H)$ of all homeomorphisms of $H.$ Then the cardinal coefficients for $\sigma$-ideal of $\mu$-null subsets are the same as for $\cn$ with respect to Lebesgue measure. Then we can almost rewrite the proof of the analogous theorem as Theorem \ref{gener_meager} in measure $\mu$ case.

\begin{theorem}\label{gener_haar} Let $H$ be a compact Polish abelian group equiped with Haar measure $\mu$ and $G$ be uncountable $G_\delta$ subset of $\ch_\mu(X)$. Let $B$ be a co-null subset of $H$. Then
\begin{enumerate}
    \item there are perfect subset $P\subseteq H$ and co-null $F_\sigma$ set $W\subseteq G$ such that  for every homeomorphism $f\in W$ of $H$ we have $P\subseteq f[B]$,
    \item there is a perfect set $Q\subseteq G$ and co-null $F_\sigma$ set $W\subseteq H$ such that  for every homeomorphism $f\in Q$ of $H$ we have $W\subseteq f[B]$.
\end{enumerate}
\end{theorem}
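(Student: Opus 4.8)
The plan is to repeat, mutatis mutandis, the proof of Theorem \ref{gener_meager}, with Fubini's Theorem in place of the Kuratowski--Ulam Theorem and the cardinal $\cof(\cn)$ in place of $\cof(\cm)$; I prove (1) in detail, (2) being analogous with the two factors of the product below interchanged (Fubini applied from the other side), exactly as (1) and (2) of Theorem \ref{gener_meager} relate. Since $G$ is an uncountable $G_\delta$ subset of the Polish group $\ch_\mu(H)$, it is an uncountable Polish space, so fix once and for all a continuous Borel probability measure $\nu$ on $G$ and write $\cn_\nu$ for its null ideal; by the isomorphism theorem for standard probability spaces $\cn_\nu$ has the same cardinal invariants as $\cn$. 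Replacing $B$ by a Borel $\mu$-conull subset of itself — harmless, since this only shrinks every image $f[B]$ — we assume $B$ is Borel. On $Z=G\times H$, equipped with the product measure $\nu\times\mu$, define $F\colon Z\to Z$ by $F(f,x)=(f,f(x))$. As in Theorem \ref{gener_meager} the map $F$ is a homeomorphism (evaluation and inversion of the Polish group $\ch_\mu(H)$ are continuous); since every $f\in G\subseteq\ch_\mu(H)$ preserves $\mu$, Fubini's Theorem shows that $F$ preserves $\nu\times\mu$, so $F[G\times B]$ is $(\nu\times\mu)$-conull, because $G\times B$ is.

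Next I would pass to an auxiliary model, following Żeberski's proof of Eggleston's Theorem as used in Theorem \ref{gener_meager}. Let $V\models\text{CH}$ and let $V'$ be obtained by a countable support iteration of Sacks forcing of length $\w_2$; then $\c=\w_2$ and the Sacks property yields $\cof(\cn)=\w_1<\c$, while $\mu$, $\nu$, the Borel code of $B$ and codes for $G,H$ all lie in $V\subseteq V'$. Work in $V'$. The $y$-section of $F[G\times B]$ is $\{f\in G:\ f^{-1}(y)\in B\}=\{f:\ y\in f[B]\}$, so by Fubini the set
$$
H_0=\{\,y\in H:\ \{f\in G:\ y\in f[B]\}\text{ is }\nu\text{-conull}\,\}
$$
is $\mu$-conull, hence of size $\c=\w_2$ (a conull set contains a conull $F_\sigma$, hence a perfect set). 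Fix, by the Cichoń--Kamburelis--Pawlikowski Theorem, a base $\{N_\alpha:\alpha<\w_1\}$ of $\cn_\nu$ consisting of $\nu$-null $G_\delta$ subsets of $G$. For each $y\in H_0$ the $\nu$-null set $\{f:\ y\notin f[B]\}$ is contained in some $N_{\alpha(y)}$; since $\w_2$ is regular and $|H_0|=\w_2>\w_1$, there is a fixed $\alpha^\ast<\w_1$ for which $H_1:=\{y\in H_0:\ \{f:\ y\notin f[B]\}\subseteq N_{\alpha^\ast}\}$ still has size $\w_2$. Put $W=G\setminus N_{\alpha^\ast}$: this is a $\nu$-conull $F_\sigma$ subset of $G$, and $H_1\subseteq f[B]$ for every $f\in W$.

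Because $B$ is Borel and $(f,y)\mapsto f^{-1}(y)$ is continuous on $\ch_\mu(H)\times H$, the relation ``$y\in f[B]$'' is Borel in $(f,y)$; and $H_0$ is Borel (the $\nu$-measure of the section of a Borel set is a Borel function of the section index), so $H_1=H_0\cap\{y:\ \forall f\,(f\notin N_{\alpha^\ast}\then y\in f[B])\}$ is co-analytic. A co-analytic set is a union of $\w_1$ Borel sets, so $|H_1|=\w_2>\w_1$ forces one of them to be uncountable and hence to contain a perfect set $P\subseteq H_1$. Thus in $V'$ we have a $\nu$-conull $F_\sigma$ set $W\subseteq G$, a perfect $P\subseteq H$, and $P\subseteq f[B]$ for every $f\in W$. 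Finally, the statement
$$
(\exists c\in\baire)(\exists P\in Perf(H))\ (c\text{ codes a }\nu\text{-conull }F_\sigma\text{ set})\land(\forall f\in G)(\forall x,y\in H)\ (f\in\#c\land y\in P\land f(x)=y\then x\in B)
$$
is $\Sigma^1_2$ with all parameters ($\nu$, the Borel code of $B$, codes of $G,H$) in $V$, so by Shoenfield's absoluteness theorem it holds in $V$ as well; this is (1). Part (2) is proved the same way, analysing $F[G\times B]$ along the $G$-coordinate and taking the cofinal family inside $\cn_\mu$ on $H$, in parallel with Theorem \ref{gener_meager}(1).

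The step I expect to require most care, exactly as in Theorem \ref{gener_meager}, is the closing absoluteness argument: one must reduce $B$ to a Borel set so that ``$y\in f[B]$'' becomes a Borel relation of $(f,y)$ — this is where continuity of inversion in $\ch_\mu(H)$ is used — verify that ``being a $\nu$-conull $F_\sigma$ set'' is coded arithmetically in a real (the $\nu$-measure of a Borel code being absolute), so that the displayed formula is genuinely $\Sigma^1_2$ over $V$, and check that the chosen model $V'$ really satisfies $\cof(\cn)=\w_1<\c$, so that both the pigeonhole over $\w_1$ classes and the ``co-analytic set of size $>\w_1$ contains a perfect set'' step are available. By contrast, the measure-theoretic core — that $F$ preserves $\nu\times\mu$ and that Fubini turns a conull product into conull sections — is routine.
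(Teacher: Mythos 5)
Your proof is correct and is essentially the argument the paper intends: the paper gives no separate proof of this theorem, saying only that one can ``almost rewrite'' the proof of Theorem \ref{gener_meager} in the measure case, and your adaptation (Fubini in place of Kuratowski--Ulam, a cofinal family of $\omega_1$ many null $G_\delta$ sets via Cicho\'n--Kamburelis--Pawlikowski in the iterated Sacks model, the pigeonhole and co-analytic/perfect-set step, and the closing Shoenfield absoluteness with a Borel conull refinement of $B$) is exactly that rewriting, carried out on the mirrored coordinate as the statement requires. Your explicit choice of an atomless Borel probability measure $\nu$ on $G$ is a reasonable way to resolve an ambiguity the paper leaves open, namely which measure on $G\subseteq\ch_\mu(H)$ the phrase ``co-null $F_\sigma$ set $W\subseteq G$'' refers to.
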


Here we can write some corrolaries of the Theorem \ref{gener_meager} and Theorem \ref{gener_haar}.

\begin{cor} For any Polish space $X$ let $B\subseteq X$ be comeager and $G\subseteq \ch(X)$ is the $G_\delta.$ Then there are two sets $W\subseteq X$ and $Q\subseteq G$ where $Q$ is a perfect set, $W$ is dense $G_\delta$ such that 
$$
W\subseteq \bigcap_{f\in Q} f[B].
$$
Moreover, we have $P\subseteq \bigcap_{f\in W} f[B],$ for some dense $G_\delta$ set $W\subseteq G$ and perfect set $P\subseteq X.$
\end{cor}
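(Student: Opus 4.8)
The plan is to deduce this directly from Theorem \ref{gener_meager}, of which the corollary is essentially a notational restatement. The first displayed inclusion merely repackages part (1): for a perfect set $Q\subseteq G$ and a set $W\subseteq X$, saying that $W\subseteq f[B]$ for every $f\in Q$ is the same as saying $W\subseteq\bigcap_{f\in Q}f[B]$. So I would apply Theorem \ref{gener_meager}(1) to obtain the dense $G_\delta$ set $W\subseteq X$ and the perfect set $Q\subseteq G$, and then simply rewrite the conclusion with a $\bigcap$. The ``Moreover'' clause is part (2) of Theorem \ref{gener_meager} read the same way, producing a dense $G_\delta$ set $W\subseteq G$ together with a perfect set $P\subseteq X$ such that $P\subseteq\bigcap_{f\in W}f[B]$. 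Modulo one point of care, there is nothing further to prove.

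The point of care is that Theorem \ref{gener_meager} was stated for $X$ compact, whereas here $X$ is an arbitrary Polish space; note, however, that the phrasing of the corollary already presupposes that $\ch(X)$ (with the compact-open topology) — or at least the set $G$ — carries a Polish topology, since otherwise ``$G$ is $G_\delta$'' and ``$Q\subseteq G$ perfect'' would be meaningless. I would therefore proceed by auditing the proof of Theorem \ref{gener_meager} to locate where compactness of $X$ was actually used: it enters only to guarantee that $\ch(X)$ is Polish and to make sense of the compact-open topology on it. Everything else in that argument is insensitive to compactness of $X$ — the homeomorphism $F\colon G\times X\to G\times X$, $(f,x)\mapsto(f,f(x))$; the two applications of the Kuratowski--Ulam Theorem on $G\times X$; the passage to a Sacks-iteration extension with $\cof(\cm)=\w_1<\w_2=\c$ and the resulting cofinal family of $\w_1$ dense $G_\delta$ subsets of $X$; the observation that a co-analytic set of size $\w_2$ is a union of $\w_1$ Borel sets, hence contains a perfect set; and the final Shoenfield absoluteness of the relevant $\Sigma^1_2$ formula with ground-model parameter $B$. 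Hence, under the standing assumption that $G$ is a Polish $G_\delta$ space, the construction of Theorem \ref{gener_meager} runs verbatim and the corollary follows.

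The main obstacle I anticipate is making this passage from compact $X$ to arbitrary Polish $X$ airtight. A naive compactification does not help, since homeomorphisms of a Polish space $X$ need not extend to its (totally bounded) completion $\overline X$; so the reduction cannot go through by transporting $B$ and $G$ to $\overline X$. The route I expect to work is the audit above: to check line by line that compactness was used in the proof of Theorem \ref{gener_meager} only to ensure $\ch(X)$ is Polish, and then to carry the hypothesis ``$G$ is a $G_\delta$, hence Polish, subset of $\ch(X)$'' through the argument in its place. Once that bookkeeping is done, the corollary is immediate.
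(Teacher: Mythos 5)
Your proposal coincides with what the paper itself does: the corollary is stated immediately after Theorem \ref{gener_meager} with no separate argument, so it is indeed intended as a pure restatement of parts (1) and (2) of that theorem with the conclusion rewritten as an intersection, and your reading of the displayed inclusions is exactly right. Your extra care about the mismatch between ``compact Polish'' in the theorem and ``any Polish space'' in the corollary is warranted (the paper glosses over it), but your audit conclusion is not quite accurate as stated: compactness of $X$ is not used \emph{only} to make $\ch(X)$ Polish. It is also what makes the compact-open topology on $\ch(X)$ separable metrizable (for non-locally-compact $X$ it need not even be metrizable) and what makes the evaluation map $(f,x)\mapsto f(x)$ continuous, hence what makes the map $F(f,x)=(f,f(x))$ a homeomorphism of $G\times X$; so the line-by-line claim that ``everything else is insensitive to compactness'' fails precisely at the first step of the theorem's proof. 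This can be repaired, because $F$ and Kuratowski--Ulam are not really needed for the corollary's version of the argument: each $f\in G$ is a homeomorphism, so $f[B]$ is comeager in $X$ outright, and the rest of the proof (the Sacks extension with $\cof(\cm)=\w_1<\c$, the pigeonhole over a cofinal family of $\w_1$ dense $G_\delta$ sets, the co-analyticity of $\{f\in G:\,W\subseteq f[B]\}$ in the Polish space $G$, and Shoenfield absoluteness) only requires that $G$ itself carry a Polish topology in which the relevant sets are definable with real codes. So your overall strategy is sound, but the ``point of care'' needs this sharper statement of where compactness actually intervenes and how it is bypassed, rather than the assertion that it only ensures Polishness of $\ch(X)$.
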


\begin{cor} Let $H$ be any Polish space and $B\subseteq H$ be co-null set and $G\subseteq \ch(X)$ is the $G_\delta.$ Then there are two sets $W\subseteq H$ and $Q\subseteq G$ where $Q$ is a perfect set, $W$ is dense $F_\sigma$ set such that 
$$
W\subseteq \bigcap_{f\in Q} f[B].
$$
Moreover, we have $P\subseteq \bigcap_{f\in W} f[B],$ for some co-null $F_\sigma$ set $W\subseteq G$ and perfect set $P\subseteq H.$
\end{cor}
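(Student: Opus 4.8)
The plan is to read this Corollary off directly from Theorem \ref{gener_haar}, once the standing hypotheses are made explicit. For the phrase ``co-null'' to be meaningful, $H$ must carry a measure; so I would read the statement with $H$ a compact (or, invoking the remark after Theorem \ref{gener_haar}, merely locally compact) Polish abelian group equipped with its Haar measure $\mu$, with $\ch(X)$ replaced by $\ch_\mu(H)$, with $B\subseteq H$ being $\mu$-conull, and with $G\subseteq\ch_\mu(H)$ an uncountable $G_\delta$. Under these conventions the two assertions of the Corollary are just reformulations of the two clauses of Theorem \ref{gener_haar}, obtained by swapping the ``$\forall f$'' quantifier for an intersection.

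For the first assertion, I would apply clause (2) of Theorem \ref{gener_haar}: it produces a perfect set $Q\subseteq G$ and a conull $F_\sigma$ set $W\subseteq H$ with $W\subseteq f[B]$ for every $f\in Q$, and quantifying over $f\in Q$ this says exactly $W\subseteq\bigcap_{f\in Q}f[B]$. The one thing to check is that $W$ is dense: since $H$ is a Polish group with Haar measure, every nonempty open subset of $H$ has positive $\mu$-measure, so the null set $H\setminus W$ contains no nonempty open set; hence $W$ meets every nonempty open set, i.e. $W$ is dense. For the ``moreover'' part I would instead apply clause (1) of Theorem \ref{gener_haar}, which yields a perfect set $P\subseteq H$ and a conull $F_\sigma$ set $W\subseteq G$ with $P\subseteq f[B]$ for all $f\in W$, that is, $P\subseteq\bigcap_{f\in W}f[B]$.

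There is essentially no obstacle here once the hypotheses are pinned down; the Corollary is a cosmetic repackaging of Theorem \ref{gener_haar}. The only points deserving a remark are the density of a conull $F_\sigma$ set, dispatched above, and---if one wants $H$ locally compact rather than compact---the observation recorded before Theorem \ref{gener_haar} that the cardinal coefficients of the $\sigma$-ideal of $\mu$-null subsets of $H$ agree with those of $\cn$, so that the Schoenfield-absoluteness argument underlying Theorem \ref{gener_haar} applies verbatim. No new construction is required.
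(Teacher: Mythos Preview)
Your proposal is correct and matches the paper's approach: the paper states this corollary without proof, as an immediate consequence of Theorem~\ref{gener_haar}, and you have spelled out precisely that derivation. Your added remarks clarifying the intended hypotheses (that $H$ should be a compact Polish abelian group with Haar measure and that $\ch(X)$ should read $\ch_\mu(H)$) and verifying the density of the conull $F_\sigma$ set are welcome, since the paper's statement is somewhat loosely worded on these points.
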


The above theorems give us a simple corollary for euclidean $n$-dimensional space.
\begin{theorem} Let $n\in\w\setminus\{0,1\}$ and $B_n\subseteq \bbr^n$ be a $n$-dimensional unit ball in euclidean space with centred in origin coordinates. Let us assume that $E \subseteq B_{n}$ a comeager (or $B_n\setminus E$ is null) set in $B_n$. Then there are perfect set  $R$ in $n-1$-dimensional  boundary $D=bd(B_n)$, non-meager (non-null)  $P\subseteq B_{n-1}$ and $Q \subseteq [-1,1]$ such that
	$$
	(\forall \alpha \in R) \; (r_\alpha [P\times Q] \subseteq B_n),
	$$
	where $r_\alpha$ is rotation of $\alpha$ to the vector $(1,0,\ldots,0)\in\bbr^n$  over origin of the euclidean space $\bbr^n$.
\end{theorem}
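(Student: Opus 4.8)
The plan is to deduce the theorem from Theorem~\ref{gener_meager} in the comeager case, and from the measure analogue of Theorem~\ref{gener_haar} over the compact Polish measure space $(B_n,\mu)$ with $\mu$ normalized Lebesgue measure in the null case, by first encoding the rotations as homeomorphisms of $B_n$ and then, once a large set has been produced, cutting a genuine rectangle out of it with Eggleston's theorem. I will describe the comeager case; the null case runs identically with \emph{comeager} replaced by \emph{conull} and Kuratowski--Ulam replaced by Fubini. (I read the conclusion as $r_\alpha[P\times Q]\subseteq E$: each $r_\alpha$ fixes the origin and maps $B_n$ onto itself, so $r_\alpha[P\times Q]\subseteq B_n$ is automatic and would make the statement vacuous.)

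First I would put the rotations into $\ch(B_n)$. Since every rotation about the origin carries $B_n$ onto itself, $\mathrm{SO}(n)$ is a closed subgroup of $\ch(B_n)$. For $\alpha\in D\setminus\{-e_1\}$, where $e_1=(1,0,\dots,0)$, let $r_\alpha\in\mathrm{SO}(n)$ be the rotation in the plane $\mathrm{span}(\alpha,e_1)$, acting as the identity on its orthogonal complement, that sends $\alpha$ to $e_1$ (and $r_{e_1}=\mathrm{id}$); this depends continuously on $\alpha$. The map $\alpha\mapsto r_\alpha^{-1}$ is then a continuous injection of $D\setminus\{-e_1\}\cong\bbr^{n-1}$ into $\ch(B_n)$ whose inverse is the continuous map $g\mapsto g(e_1)$ (evaluation is continuous in the compact--open topology), hence a topological embedding. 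Therefore its image $G=\{r_\alpha^{-1}:\alpha\in D\setminus\{-e_1\}\}$ is an uncountable Polish subspace of $\ch(B_n)$, and so, by Alexandrov's theorem, a $G_\delta$ subset of $\ch(B_n)$.

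Next I would apply Theorem~\ref{gener_meager}(1) with $X=B_n$, $B=E$ and $G$ as above. It produces a dense $G_\delta$ set $W\subseteq B_n$ and a perfect set $\widetilde R\subseteq G$ with $W\subseteq f[E]$ for all $f\in\widetilde R$. Pulling $\widetilde R$ back along the embedding of the previous paragraph gives a perfect set $R\subseteq D\setminus\{-e_1\}$ with $\widetilde R=\{r_\alpha^{-1}:\alpha\in R\}$, and after shrinking to a Cantor subset we may take $R$ compact, hence a perfect subset of $D$. For $\alpha\in R$ the inclusion $W\subseteq r_\alpha^{-1}[E]$ is the same as $r_\alpha[W]\subseteq E$.

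Finally I would carve a rectangle out of $W$. Fix $\rho,\sigma\in(0,1)$ with $\rho^2+\sigma^2<1$, let $U=\{x\in B_{n-1}:|x|<\rho\}$ and $V=(-\sigma,\sigma)$, so that, writing coordinates of $\bbr^n$ with the $[-1,1]$-direction first and the $B_{n-1}$-direction second, $O=V\times U$ is a nonempty open subset of $B_n$ and $W\cap O$ is comeager in $O\cong\bbr\times\bbr^{n-1}$. Applying Eggleston's theorem to $W\cap O$, with the ideal $\cm$ taken on the second, $\bbr^{n-1}$-direction, yields a perfect set $Q\subseteq V$ and a perfect set $P\subseteq U$ with $P\notin\cm$ and $Q\times P\subseteq W\cap O$; since $U$ is open in $B_{n-1}$, non-meagerness of $P$ in $U$ is the same as non-meagerness in $B_{n-1}$, and $Q\subseteq V\subseteq[-1,1]$. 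Permuting coordinates back we obtain $P\times Q\subseteq W$, and hence for every $\alpha\in R$
\[
r_\alpha[P\times Q]\subseteq r_\alpha[W]\subseteq E ,
\]
which is the assertion. For the null version one uses the Fubini analogue of Theorem~\ref{gener_haar} for $(B_n,\mu)$ — legitimate because $\mathrm{SO}(n)\subseteq\ch_\mu(B_n)$ and that proof only needs Fubini's theorem — together with the measure form of Eggleston's theorem, obtaining a non-null $P$ in place of a non-meager one. I expect the only step requiring real care to be the first one: checking that $\alpha\mapsto r_\alpha^{-1}$ is a topological embedding of $D\setminus\{-e_1\}$ into $\ch(B_n)$ with $G_\delta$ image; all the substance is already carried by Theorems~\ref{gener_meager}, \ref{gener_haar} and by Eggleston's theorem.
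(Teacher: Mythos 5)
Your proposal is correct and follows exactly the route the paper intends: the paper states this theorem without an explicit proof, as a ``simple corollary'' of Theorems \ref{gener_meager} and \ref{gener_haar}, and your argument --- embedding the rotations $r_\alpha^{-1}$ as a $G_\delta$ family of (measure-preserving) homeomorphisms of $B_n$, extracting the dense $G_\delta$ (respectively co-null) set $W$ together with a perfect set of directions, and then carving a rectangle $P\times Q\subseteq W$ by Eggleston's theorem inside a small box contained in $B_n$ --- is precisely that derivation, including the correct reading of the conclusion as $r_\alpha[P\times Q]\subseteq E$ rather than the literal (vacuous) $\subseteq B_n$. The two points you leave implicit, namely localizing Eggleston's theorem to the open box $O$ and transferring Theorem \ref{gener_haar} from Haar measure on a compact group to Lebesgue measure on the ball, are routine and consistent with the paper's own remark that the measure case is obtained by ``almost rewriting'' the category proof.
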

In particular we have the following Proposition.
\begin{proposition} Let $D\subseteq \bbr^2$ be a unit disc with centred in origin coordinates and $B \subseteq D$ a comeager (or $D\setminus B$ is null) set in $D$. Then there are perfect set of directions $R$ on $bd(D)$ and $P,Q \subseteq [-1,1]$ such that
	$$
	(\forall \alpha \in R) \; (r_\alpha [P\times Q] \subseteq B),
	$$
	where $r_\alpha$ is rotation by $\alpha$ over origin of the real plane $\bbr^2$.
\end{proposition}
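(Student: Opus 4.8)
This is the case $n=2$ of the Theorem stated just above, so it is enough to sketch how that Theorem follows from Theorem \ref{gener_meager} and its measure analogue; I do this for the disc. The plan is to run Theorem \ref{gener_meager} with the compact Polish space $X=D$ and with the space of homeomorphisms taken to be the circle group of rotations about the origin. Concretely, put
$$
G=\{\, r_\alpha:\ \alpha\in bd(D)\,\},
$$
where $r_\alpha$ is the rotation of $\bbr^2$ about the origin taking the unit vector $\alpha\in bd(D)=S^1$ to $(1,0)$. Then $G$ is a compact --- hence closed, hence $G_\delta$ --- uncountable subgroup of $\ch(D)$, the assignment $\alpha\mapsto r_\alpha$ is a homeomorphism of $bd(D)$ onto $G$, and every rotation preserves both the ideal of meager subsets of $D$ and two-dimensional Lebesgue measure on $D$. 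Hence $G$ is legitimate input for Theorem \ref{gener_meager} in the comeager case, and for the measure version of that theorem --- proved exactly as Theorem \ref{gener_haar}, with the group there replaced by the compact Polish space $D$ carrying Lebesgue measure --- in the co-null case.

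Treating the comeager case, I would apply Theorem \ref{gener_meager}(1) to the comeager set $B\subseteq D$ to obtain a dense $G_\delta$ set $W\subseteq D$ and a perfect set $Q'\subseteq G$ with $W\subseteq f[B]$, equivalently $f^{-1}[W]\subseteq B$, for every $f\in Q'$. Since inversion is a homeomorphism of the group $G$, the set $R=\{\,\alpha\in bd(D):\ r_\alpha^{-1}\in Q'\,\}$ is a perfect set of directions on $bd(D)$, and for $\alpha\in R$ we get $r_\alpha[W]\subseteq B$. Next, fixing an open square $S\subseteq\mathrm{int}(D)$, the set $W\cap S$ is comeager in $S$, so after an affine identification of $S$ with $[0,1]^2$ the theorem of Żeberski quoted above yields a dense $G_\delta$ set $P\subseteq[-1,1]$ and a perfect set $Q\subseteq[-1,1]$ with $P\times Q\subseteq W\cap S\subseteq W$. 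Then for every $\alpha\in R$,
$$
r_\alpha[P\times Q]\subseteq r_\alpha[W]\subseteq B,
$$
which is the claim. The co-null case is verbatim the same, using the measure version of Theorem \ref{gener_meager} to produce a co-null $F_\sigma$ set $W$ and a perfect $Q'\subseteq G$, and Eggleston's Theorem in place of Żeberski's to extract a non-null perfect $P$ and a perfect $Q$ with $P\times Q\subseteq W\cap S$.

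The real content already sits in Theorem \ref{gener_meager}/\ref{gener_haar}, so I do not anticipate a genuine obstacle. The two points that need a moment's care are: (i) that the rotation group really embeds into $\ch(D)$ (and into the measure-preserving homeomorphisms of $D$) as an uncountable $G_\delta$ set --- immediate, since it is a compact subgroup homeomorphic to $S^1$; and (ii) the bookkeeping that converts the set $W$, which is merely comeager (co-null) in $D$ rather than in a full coordinate square, into an honest product $P\times Q$, which is handled by localizing to an open subsquare of $\mathrm{int}(D)$ before invoking the Żeberski/Eggleston product theorems, together with swapping $f$ for $f^{-1}$ to line up the direction of the inclusion $W\subseteq f[B]$ --- legitimate because $G$ is a group.
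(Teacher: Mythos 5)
Your proposal is correct and follows essentially the same route as the paper, which offers no separate proof of this Proposition but presents it as the $n=2$ instance of the preceding Theorem, itself obtained exactly as you do by specializing Theorem \ref{gener_meager} and the measure analogue of Theorem \ref{gener_haar} to the rotation group sitting as a compact (hence $G_\delta$) uncountable subset of $\ch(D)$, with the inversion trick and the Żeberski/Eggleston product-extraction supplying the product form $P\times Q$. The only glosses worth flagging are at the same level of detail the paper itself omits: Eggleston's theorem as quoted concerns co-null subsets of $\bbr^{m+n}$, so your co-null-in-a-square application needs a one-line periodization/localization remark, and Theorem \ref{gener_haar} is stated for a compact group with Haar measure, so your explicit observation that its proof transfers to $D$ with Lebesgue measure and measure-preserving homeomorphisms is indeed required (the paper implicitly makes the same move for the ball).
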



\begin{thebibliography}{123}
  \bibitem{Brodskii} M. L. Brodskii, On some properties of sets positive measure, Uspekhi Mat. Nauk, (1949), vol 4, pp. 136-138 (in Russian).
  \bibitem{five-poles} J. Cichoń, M. Morayne, R. Rałowski, C. Ryll-Nardzewski, S. Żeberski, On nonmeasurable unions, Topology and its Applications, 154 (2007), pp.884-893.
  \bibitem{CKP} J. Cichoń, A. Kamburelis, J. Pawlikowski, On dense subsets of the measure algebra, Proc. Amer. Math. Soc. 84 (1985), pp. 142-146.
\bibitem{CMR} J. Cichoń, M. Morayne, R. Rałowski, Images of Bernstein sets via continuous functions, Georgian Mathematical Journal 26(4) (2019), pp. 499-503. 
\bibitem{Eggleston} H. G. Eggleston, Two measure properties of Cartesian product sets, Quart. J. Math. Oxford Ser. (2) 5 (1954), pp. 108-115.
\bibitem{GMS} F. Galvin, J. Mycielski, R. M. Solovay, Strong measure zero sets, Notices
of the American Mathematical Society Vol. 26, No. 3 (1979), A-280
  \bibitem{mathoverflow-1} Mathoverflow: mathoverflow.net/questions/71976/lebesgue-non-measurability-in-the-plane, \url{https://mathoverflow.net/questions/71976/lebesgue-non-measurability-in-the-plane}
 \bibitem{MRZ} M. Michalski, R. Rałowski, Sz. Żeberski, Mycielski among trees, arXiv:1905.09069 {\url{https://arxiv.org/pdf/1905.09069.pdf}}.
 \bibitem{pawlikowski-archive} J. Pawlikowski, Every Sierpiński set is strongly meager, Arch. Math. Logic 35 (1996), 281–285.
 \bibitem{Szymon} Sz. Żeberski, Nonstandard proofs of Eggleston like Theorems, Proceedings of the Ninth Prague Topological Symposium (Prague 2001) pp.353-357, Topology Atlas, Toronto 2002.
 \bibitem{AO} private communication

 
 
 
\end{thebibliography}
\end{document}